\newtheorem{theorem}{Theorem}[section]
\newtheorem{lemma}[theorem]{Lemma}
\newtheorem{corollary}[theorem]{Corollary}
\newtheorem{proposition}[theorem]{Proposition}
\theoremstyle{definition}
\newtheorem{example}[theorem]{Example}
\theoremstyle{remark}
\numberwithin{equation}{section}
\providecommand{\keywords}[1]{\textbf{\textit{Keywords:}} #1}
\begin{document}

\title[The polynomial CVP]{The polynomial cluster value problem}


\author{Sof\'ia Ortega Castillo}
\address{Centro de Investigaci\'on en Matem\'aticas A.~C.,
Jalisco S/N, Col. Valenciana, 36023, Guanajuato, Guanajuato, Mexico.}
\curraddr{}
\email{sofia.ortega@cimat.mx}
\thanks{The first author was partially supported by an IMU/CWM supplementary grant, and would like to thank Universidad Complutense de Madrid for welcoming support on short visits.}

\author{\'Angeles Prieto 
}
\address{Departamento de An\'alisis Matem\'atico, Facultad de CC. Matem\'aticas,
Universidad Complutense de Madrid, Plaza de Ciencias, 3, 28040, Madrid, Spain.}
\curraddr{}
\email{angelin@mat.ucm.es}
\thanks{The second author was partially supported by MINECO, grants MTM2012-34341 and MTM2015-65825-P, and would like to thank the politeness and support received at CIMAT}

\subjclass{Primary 46J15, 46G20; Secondary 46B25}

\maketitle

\begin{center}
January 24th, 2018
\end{center}

\begin{abstract} The polynomial cluster value problem replaces the role of the continuous linear functionals in the original cluster value problem for the continuous polynomials to describe the corresponding cluster sets and fibers. We prove several polynomial cluster value theorems for uniform algebras $H(B)$ between $A_u(B)$ and $H^{\infty}(B)$, where $B$ is the open unit ball of a complex Banach space $X$. We also obtain new results about the original cluster value problem, especially for $A_{\infty}(B)$. Examples of spaces $X$ considered here are spaces of continuous functions, $\ell_1$ and locally uniformly convex spaces.
\end{abstract}

\keywords{\textit{Keywords:} uniform algebra; fiber; cluster value; strong peak point.}


\section{Introduction}

The original cluster value problem concerns the boundary behavior of complex-valued, bounded, holomorphic functions on the open unit ball $B$ of a complex Banach space $X$. The space of bounded holomorphic functions forms the unital, commutative Banach algebra $H^{\infty}(B)$. This is, however, a very large algebra, so it is natural to work with its more manageable unital subalgebras $H(B)$ containing the bounded linear functionals $X^*$. For example, $A(B)$ is the subalgebra of $H^{\infty}(B)$ generated precisely by $X^{*}|_{B}$ and the constant $1$.  


\medskip

For a commutative Banach algebra $H(B)$ as before, there is a norm-decreasing homomorphism from $H(B)$ to the continuous functions on its spectrum, called the Gelfand transform. This homomorphism results an isometric embedding since the norm of this Banach algebra equals its spectral radius. As it turns out, the cluster value problem can be generally posed in terms of the spectrum of $H(B)$. This spectrum consists of the nonzero multiplicative linear functionals on $H(B)$, denoted $M_{H(B)}$. Unital and commutative Banach algebras $H(B)$ have its spectrum contained in the norm-one linear functionals $S_{H(B)^*}$, and this spectrum is a nonempty compact space in the usual weak-star topology making it a subspace of the unit ball of $H(B)^*$. Obvious members in the spectrum of $H(B)$, also called its characters, are evaluations at points of $B$. Other characters can be found through its respective kernel, e.~g. when it is the maximal ideal containing the functions that nullify over a certain limit \cite[\S 10.1]{Hf}, \cite[\S 7]{ACG}, \cite[Pr.~1.5]{AGGM}.

\medskip

Identifying the whole spectrum seems out of reach, but there have been advances discovering its structure. For example, the spectrum of $H^{\infty}$ of the unit disk $\mathbb{D}$ can be naturally mapped to the closed unit disk by evaluating each character at the identity function $z \mapsto z$. It is known that this mapping $\pi$ is continuous and surjective, and it is injective on the inverse image of the open unit disk. The remainder of the spectrum is mapped to the unit circle, so for each $\alpha  \in \mathbb{C}$ with $|\alpha|=1$ we call $\pi^{-1}(\alpha)$ the fiber over $\alpha$ of the spectrum. In 1961 I.~J.~Schark discovered that, for each $f$ in $H^{\infty}$ of the unit disk $\mathbb{D}$, the range of the Gelfand transform $\hat{f}$ on the fiber over any unitary $\alpha \in \mathbb{C}$ coincides with the cluster set of $f$ at $x_0$, i.e. the limit values of $f(x)$ as $x \in \mathbb{D}$ tends to $x_0$ in norm \cite{IJS}. 


\medskip

The last result was surpassed in 1962 by the Corona theorem of Carleson. The Corona of the spectrum of a Banach algebra $H(B)$ consists of those characters that are not in the closure of the ball $B$. The Corona problem asks whether such Corona is empty and, if so, we say we have a Corona theorem. Carleson  proved that the Corona of $H^{\infty}(\mathbb{D})$ is empty \cite{C}. Fundamental results about the Corona problem appear in \cite{DKSTW}.

\medskip

In the next decade, respective cluster set identifications were established despite the corresponding Corona staying unknown: by Gamelin in 1973 for the algebra $H^{\infty}$ of the polydisk in $\mathbb{C}^n$ \cite{G-itfa}, and by McDonald in 1979 for $H^{\infty}$ of the Euclidean unit ball in $\mathbb{C}^n$ or any other strongly pseudoconvex domain in $\mathbb{C}^n$ with smooth boundary \cite{Mc}. The latter result partially relied on the existence of peaking functions at points of strong pseudoconvexity \cite{Ro}.

\medskip

Passing from $\mathbb{C}^n$ to an arbitrary ambient space $X$, the cluster set of $f \in H(B)$ at $x^{**}\in \bar{B}^{**}$ consists of the accumulation points of values $f(x)$ as $x\in B$ tends to $x^{**}$ in the weak-star topology. Note that the weak-star topology is the initial topology with respect to $X^*$, and given that $H(B)$ contains the algebra generated by $X^*$ and $1$, we consider the mapping $\pi$ restricting each character to $X^*$ to become a character of $A(B)$, and again partition the spectrum of $H(B)$ in fibers via inverse images of $\pi$, ending up with each fiber associated to a point of the closed ball of the bidual $\bar{B}^{**}$. When the Corona of $H(B)$ is empty, once again the boundary behavior of each $f \in H(B)$ is determined: the cluster set of $f \in H(B)$ at every $x^{**}\in \bar{B}^{**}$ coincides with the Gelfand transform $\hat{f}$ evaluated in the fiber over $x^{**}$ \cite{ACGLM}. 

\medskip

A cluster value theorem for the Banach algebra $H(B)$ at the point $x^{**} \in \bar{B}^{**}$ asserts that the previously described cluster set identification holds (even if we do not know whether the Corona is empty). If this cluster set characterization is moreover true for all $x^{**} \in \bar{B}^{**}$ then there is a cluster value theorem for $H(B)$. The cluster value problem was first set up in this generality in \cite{ACGLM}. Several broad ideas about the cluster value problem were first established in \cite{ACGLM}, \cite{JO2} and \cite{ACLM}, and the first infinite-dimensional cluster value theorems were proved in \cite{ACGLM} and \cite{JO} focusing on Banach algebras on the ball of some classical Banach spaces. Examples include the uniformly continuous and holomorphic functions $A_u(B)$ on the ball $B$ of any Hilbert space, and $H^{\infty}$ for the ball of $c_0$ or any space of continuous functions on a scattered compact Hausdorff space. The state of the art on the cluster value problem was surveyed in \cite{CrGrMsSP}. 

\medskip

It is worth mentioning that for finite-dimensional spaces, $c_0$ and the spaces of continuous functions on scattered compact Hausdorff spaces $K$ the algebra $A_u(B)$ coincides with the ball algebra $A(B)$. In contrast, as soon as $K$ is not scattered, the algebras $A(B)$ and $A_u(B)$ are no longer equal \cite{JO}. Since $A_u(B)$ is the subalgebra of $H^{\infty}(B)$ generated by the continuous polynomials on $X$, $P(X)$ \cite[p.~56]{ACG}, while $A(B)$ is generated by $X^{*}|_{B}$ and $1$, it was first considered in \cite[p.~1565]{JO} to specialize in the \textit{cluster value problem for $H^{\infty}(B)$ over $A_u(B)$} in which the weak-star topology is replaced by the polynomial-star topology (to be described in Section 2) and the mapping $\pi$ is replaced by the restriction $\pi^{\mathcal{P}}$ of characters to $A_u(B)$.  In this paper we analyze the polynomial cluster value problem just described but for any algebra $H(B)$ between $A_u(B)$ and $H^{\infty}(B)$, that still allows considering the restriction mapping $\pi^{\mathcal{P}}$  from the characters of $H(B)$ into the characters of $A_u(B)$. The spectrum of $A_u(B)$ is studied in some detail in \cite{ACG}. As the polynomial-star topology is halfway the norm topology and the weak-star topology and it still preserves the density of the ball in the closed ball of the bidual \cite{DG}, this topology gives rise to a finer cluster value problem for which the polynomial cluster sets are contained in the original cluster sets.


\medskip

The Banach algebras $H(B)$ in this paper are uniform algebras, which are natural settings to study holomorphy and boundary value questions. A uniform algebra on a compact set $K$ is known to be a closed, separating subalgebra of $C(K)$ containing the constants. In general, a uniform algebra $H$ is a complex Banach algebra with unit satisfying that $\lim_{n\to\infty}\|f^n\|^{1/n}=\|f\|$ for all $f \in H$, as in such case it is a separating subalgebra of the continuous functions on its spectrum \cite{B}. We have mentioned the examples $A(B) \subset A_u(B) \subset H^{\infty}(B)$, and between $A_u(B)$ and $H^{\infty}(B)$ is the algebra $A_{\infty}(B)$ of bounded holomorphic functions on $B$ that extend continuously to the boundary. The inclusion of $A_u(B)$ in $A_{\infty}(B)$ is proper \cite[\S 12]{ACG} so we can also consider any intermediate algebra generated by $A_u(B)$ and finitely many arbitrary functions in $A_{\infty}(B)\setminus A_u(B)$. Among $A_{\infty}(B)$ and $H^{\infty}(B)$ we find the intermediate algebras of bounded holomorphic functions that extend continuously at finitely many points in $\bar{B}$.


\medskip

The work outline is the following. We begin Section 2 by proving general results about the polynomial cluster value problem for uniform algebras between $A_u(B)$ and $H^{\infty}(B)$. Such is an algebraic formulation of the polynomial cluster value problem that resembles a well known algebraic form of the Corona problem. Our aim is to display evidence that the polynomial cluster value problem is a natural analogue to the original cluster value problem.

\medskip

In Section 3 we focus on the polynomial cluster value problem for $H^{\infty}(B_{C(K)})$, proving that we can reduce it to checking the behavior at $0$ and at points in the unit sphere of $C(K)^{**}$. This strengthens the results in \cite[pp.~1567-1568]{JO}.

\medskip

In Section 4 we discuss peak points and strong peak points for complex-valued function spaces over a metric space. We mention examples and counterexamples of such kind of points. In particular, we refine known methods to show that a point where a given Banach space is locally uniformly convex is a strong peak point for $A(B)$. We use those examples, along with further results, to prove polynomial and original cluster value theorems for $A_{\infty}(B)$.


\medskip

For background on commutative Banach algebras and uniform algebras, see \cite{G-ua}. The reader may also want to learn about infinite-dimensional holomorphy from \cite{Mu} or through the selected results in \cite[\S 1.4]{O}.

\section{General results}

The notion of cluster set for a holomorphic function $f$ and an element $x^{**} \in X^{**}$ is provided in \cite{ACGLM} in terms of the weak-star topology. Indeed, we consider all possible limits of $f(x_{\alpha})$ for nets $(x_{\alpha})\subset B$ weak-star converging to $x^{**}$, i.e. $x_{\alpha}(x^*)\to x^{**}(x^*)$ for each linear functional $x^{*} \in X^{*}$. And the fibers are given in terms of $X^*$ as well, since for each $x^{**}\in \bar{B}^{**}$ the fiber over $x^{**}$ is $M_{x^{**}}:=\pi^{-1}(\{x^{**}\})$ where $\pi$ is the following homomorphism among spectra of algebras
\begin{align*}
\pi: M_{H(B)} & \to M_{A(B)}=\bar{B}^{**},\\
\tau & \mapsto \tau|_{X^*}.
\end{align*}
Note that for $x \in B$ the character $\delta_x \in M_{H(B)}$ which evaluates at the point $x$ is in the fiber $M_x$. The compactness of $M_{H(B)}$ and the continuity of $\pi$ in turn assure that $\pi$ is onto, namely each fiber is nonempty.

\medskip

Now we will describe the polynomial cluster sets and fibers in terms of the polynomial-star topology and $P(X)$. It is known that each polynomial $P \in P(X)$ admits a canonical extension $\tilde{P} \in P(X^{**})$ which is called its Aron-Berner extension \cite[Section 2]{G-afbs}. This canonical extension is obtained based on the Arens extension of the symmetric mapping $A$ associated to the polynomial $P$. The Arens extension is obtained extending by weak-star continuity, one variable at a time, each variable of $A$. The resulting mapping $\tilde{A}$ depends on the order the variables were extended and might not be symmetric, but the $n!$ possible extensions coincide on the diagonal. This common restriction defines $\tilde{P}$. The polynomial-star topology in the second dual of $X$ is the smallest topology that makes the Aron-Berner extension $\tilde{P}$ of every polynomial $P$ on $X$ up to $X^{**}$ continuous; this topology can be denoted by $w(X^{**}, P(X))$. The nets $(x_{\alpha})$ converging in the topology $w(X^{**}, P(X))$ to $x^{**}$ are those such that $\tilde{P}(x_{\alpha})\to \tilde{P}(x^{**})$ for all $P \in P(X)$. More basic results about $w(X^{**}, P(X))$ can be found in \cite[Section 2.1]{G-afbs}. The notation used here for the Aron-Berner extension is not standard, but it will help us to avoid confusion with the Gelfand transform and with complex conjugation.

\medskip

Throughout this section we will refer to $H(B)$ as any uniform algebra between $A_u(B)$ and $H^{\infty}(B)$. Given a holomorphic function $f \in H(B)$ as well as $x_0^{**} \in \bar{B}^{**}$, the {\it polynomial cluster set} of $f$ at $x_0^{**}$ is the nonempty set of accumulation points of values $f(x)$ as $x \in B$ tends to $x_0^{**}$ in the polynomial-star topology, i.e.
$$Cl_B^{\mathcal{P}}(f, x_0^{**}):=\{\lambda: \exists (x_{\alpha})\subset B \; | P(x_{\alpha})\to \tilde{P}(x_0^{**}) \; \text{ for all } P \in P(X), \; f(x_{\alpha})\to\lambda\}.$$

\medskip

 Given $x_0^{**} \in \bar{B}^{**}$, $\delta_{x_0^{**}} \in M_{A_u(B)}$ evaluates the Aron-Berner extension of $g \in A_u(B)$ at $x_0^{**}$, which is possible due to Lemma \ref{lm2.1} below and the continuous extension to the boundary of $\tilde{g} \in A_u(B^{**})$. The {\it polynomial fiber} of $M_{H(B)}$ at $x_0^{**} \in \bar{B}^{**}$ is the inverse image of $\delta_{x_0^{**}}$ under the restriction map $\pi^{\mathcal{P}} : M_{H(B)} \to M_{A_u(B)}$. We do not know the range of $\pi^{\mathcal{P}}$ but each fiber at a point in $\bar{B}^{**}$ is still nonempty as $B$ is dense in $\bar{B}^{**}$ in the polynomial-star topology \cite[Thm.~2]{DG}.



\medskip

 The polynomial cluster value problem can be posed as follows: if we denote the polynomial fiber of $M_{H(B)}$ at $x_0^{**}$ by $M_{x_0^{**}}^{\mathcal{P}}(H(B))$ (and if there is no confusion, we write $M_{x_0^{**}}^{\mathcal{P}}(B)$ or $M_{x_0^{**}}^{\mathcal{P}}$), and denoting the Gelfand transform of $f \in H(B)$ by $\hat{f}$, is it true that
\begin{equation}\label{eq2.1}
Cl_B^{\mathcal{P}}(f, x_0^{**}) = \hat{f}(M_{x_0^{**}}^{\mathcal{P}}), \; \; \text{ for all } f \in H(B) ?
\end{equation}

We have a \textit{polynomial cluster value theorem for $H(B)$ at $x_0^{**}$} when the equation \ref{eq2.1} holds.

\medskip

As in the case of the cluster value problem, an inclusion in the polynomial cluster value problem is always true. We use the following folklore result for that. 

\begin{lemma}\label{lm2.1}
The Aron-Berner extension of an element in $A_u(B)$ is in $A_u(B^{**})$.
\end{lemma}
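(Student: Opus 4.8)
The plan is to reduce everything to the polynomial case, where the behavior of the Aron-Berner extension is well understood, and then pass to the limit. Recall that $A_u(B)$ is precisely the closure of $P(X)$ in the supremum norm $\|\cdot\|_B$ over $B$, and likewise $A_u(B^{**})$ is the closure of $P(X^{**})$ in $\|\cdot\|_{B^{**}}$. The Aron-Berner extension gives a linear map $P \mapsto \tilde{P}$ from $P(X)$ into $P(X^{**})$, so the natural strategy is to show that this map extends continuously to $A_u(B)$ with values in $A_u(B^{**})$, and that the resulting extension is the one we call the Aron-Berner extension of $f$.

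The first key ingredient I would use is the norm-preservation property of the Aron-Berner extension on polynomials, namely $\|\tilde{P}\|_{B^{**}} = \|P\|_B$ for every $P \in P(X)$ (see \cite{DG}). Combined with the linearity of $P \mapsto \tilde{P}$, this says the map is a linear isometry for the sup norms. Then, given $f \in A_u(B)$, I would pick polynomials $P_n$ with $\|P_n - f\|_B \to 0$; the isometry forces $(\tilde{P}_n)$ to be uniformly Cauchy on $B^{**}$, hence to converge uniformly to some $g$. Since each $\tilde{P}_n$ is a continuous polynomial, and therefore uniformly continuous and holomorphic on the bounded set $B^{**}$, and since these properties are preserved under uniform limits, $g$ lies in $A_u(B^{**})$.

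It remains to identify $g$ with the Aron-Berner extension $\tilde{f}$ of $f$. Here I would argue that the limit $g$ does not depend on the chosen approximating sequence: if $Q_n \to f$ uniformly as well, then $\|\tilde{P}_n - \tilde{Q}_n\|_{B^{**}} = \|P_n - Q_n\|_B \to 0$, so both sequences produce the same uniform limit. This makes $f \mapsto g$ the unique isometric extension of $P \mapsto \tilde{P}$ from the dense subspace $P(X)$ of $A_u(B)$, which is exactly what is meant by the Aron-Berner extension on $A_u(B)$; matching this with the extension defined through the monomial expansion in \cite{G-afbs} is routine.

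I expect the main obstacle to be the clean invocation of the norm-preservation equality $\|\tilde{P}\|_{B^{**}} = \|P\|_B$, since everything downstream rests on it, together with the verification that the sequence-independent limit $g$ genuinely coincides with the Aron-Berner extension rather than merely some holomorphic extension of $f$ to $B^{**}$.
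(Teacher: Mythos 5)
The paper itself offers no proof of this lemma---it is invoked as a folklore result---so your argument has to stand on its own. Its skeleton does, and it is the standard one: $A_u(B)$ is the uniform closure of $P(X)|_B$ (the paper cites \cite[p.~56]{ACG} for this), the Davie--Gamelin norm preservation $\|\tilde{P}\|_{B^{**}}=\|P\|_B$ makes $(\tilde{P}_n)$ uniformly Cauchy on $B^{**}$ whenever $P_n\to f$ uniformly on $B$, and a uniform limit of polynomials on $B^{**}$ certainly lies in $A_u(B^{**})$.

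The step that is not right as written is the identification $g=\tilde{f}$. You argue that $f\mapsto g$ is the unique isometric extension of $P\mapsto\tilde{P}$ from the dense subspace $P(X)$, ``which is exactly what is meant by the Aron--Berner extension on $A_u(B)$.'' That is circular: the lemma is about the Aron--Berner extension as it is actually defined---term by term, via the Arens extensions of the homogeneous Taylor terms of $f$ (see \cite{G-afbs})---and nothing in the density argument tells you that this concretely defined object coincides with your abstract limit $g$; uniqueness of an isometric extension gives a well-defined map, not the equality with $\tilde{f}$. What closes the gap is some continuity of $f\mapsto\tilde{f}$. The cleanest route is \cite[Theorem 5]{DG} (cited elsewhere in this paper), which asserts that the Aron--Berner extension is a linear isometry on all of $H^{\infty}(B)$, not merely on polynomials: then $\|\tilde{f}-\tilde{P}_n\|_{B^{**}}=\|\widetilde{f-P_n}\|_{B^{**}}=\|f-P_n\|_B\to 0$, so $\tilde{f}$ itself is the uniform limit of the $\tilde{P}_n$ and the lemma follows at once, with no need for the intermediate $g$. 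If you prefer not to invoke that theorem in full strength, you can instead note that if $f=\sum_m Q_m$ is the Taylor expansion, the Cauchy estimates give $\|Q_m\|_B\le\|f\|_B$, so by norm preservation for homogeneous polynomials the functional $f\mapsto\tilde{f}(x^{**})$ is bounded on $H^{\infty}(B)$ (by $(1-r)^{-1}$ when $\|x^{**}\|\le r<1$); this pointwise continuity forces $g=\tilde{f}$ on $B^{**}$. Either way the repair is short, but it is a needed step, not a routine remark.
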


\begin{theorem}\label{thm2.2}
For every $x_0^{**} \in \bar{B}^{**}$ and $f \in H(B)$,
\begin{equation}\label{eq2.3}
Cl_B^{\mathcal{P}}(f, x_0^{**})\subset \hat{f}(M_{x_0^{**}}^{\mathcal{P}}).
\end{equation}
\end{theorem}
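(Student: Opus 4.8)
The plan is to realize each limit value $\lambda$ in the polynomial cluster set as the value of the Gelfand transform of $f$ at a suitable character lying in the polynomial fiber, using the weak-star compactness of the spectrum to produce that character. So I would start with $\lambda \in Cl_B^{\mathcal{P}}(f, x_0^{**})$ and fix a net $(x_\alpha) \subset B$ witnessing it, so that $P(x_\alpha) \to \tilde{P}(x_0^{**})$ for every $P \in P(X)$ and $f(x_\alpha) \to \lambda$. Considering the corresponding net of evaluation characters $(\delta_{x_\alpha}) \subset M_{H(B)}$, and recalling that $M_{H(B)}$ is weak-star compact, I would pass to a subnet along which $\delta_{x_\alpha} \to \tau$ for some $\tau \in M_{H(B)}$, retaining both convergence properties above along the subnet. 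Note that $A_u(B) \subseteq H(B)$, so each $\delta_{x_\alpha}$ restricts to a character of $A_u(B)$ and weak-star convergence of $(\delta_{x_\alpha})$ controls evaluation at every $g \in A_u(B)$ as well as at $f$.

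Next I would verify the two requirements on $\tau$. First, $\hat{f}(\tau) = \tau(f) = \lim_\alpha \delta_{x_\alpha}(f) = \lim_\alpha f(x_\alpha) = \lambda$, which is immediate from weak-star convergence of the characters evaluated at $f$ together with $f(x_\alpha)\to\lambda$. Second, I must check that $\tau$ lies in the fiber $M_{x_0^{**}}^{\mathcal{P}}$, that is, $\pi^{\mathcal{P}}(\tau)=\tau|_{A_u(B)}=\delta_{x_0^{**}}$; equivalently, $\tau(g)=\tilde{g}(x_0^{**})$ for every $g \in A_u(B)$. Here I would invoke Lemma \ref{lm2.1} to ensure that $\tilde{g} \in A_u(B^{**})$, so that $\delta_{x_0^{**}}(g)=\tilde{g}(x_0^{**})$ is well defined, and I would use that the Aron-Berner extension is a norm-preserving linear map on $A_u(B)$.

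The crux is promoting the polynomial-star convergence, which a priori only controls $P(x_\alpha)$ for $P \in P(X)$, to control over all of $A_u(B)$. I would exploit that $A_u(B)$ is the closed subalgebra of $H^{\infty}(B)$ generated by $P(X)$, so each $g \in A_u(B)$ is a uniform limit on $B$ of polynomials. Given $\epsilon>0$, I would choose $P \in P(X)$ with $\|g-P\|_B<\epsilon$; by linearity and the isometry of the extension, $|\tilde{g}(x_0^{**})-\tilde{P}(x_0^{**})| = |\widetilde{(g-P)}(x_0^{**})| \le \|g-P\|_B<\epsilon$, while $|g(x_\alpha)-P(x_\alpha)|<\epsilon$ because $x_\alpha\in B$. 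Splitting
$$|g(x_\alpha)-\tilde{g}(x_0^{**})|\le|g(x_\alpha)-P(x_\alpha)|+|P(x_\alpha)-\tilde{P}(x_0^{**})|+|\tilde{P}(x_0^{**})-\tilde{g}(x_0^{**})|$$
and using $P(x_\alpha)\to\tilde{P}(x_0^{**})$ for the middle term, I would obtain $\limsup_\alpha|g(x_\alpha)-\tilde{g}(x_0^{**})|\le 2\epsilon$, and hence $\tau(g)=\lim_\alpha g(x_\alpha)=\tilde{g}(x_0^{**})$ upon letting $\epsilon\to 0$. This places $\tau$ in $M_{x_0^{**}}^{\mathcal{P}}$, so $\lambda=\hat{f}(\tau)\in\hat{f}(M_{x_0^{**}}^{\mathcal{P}})$, which is the desired inclusion. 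The only delicate point I anticipate is precisely this three-term approximation: it hinges on the Aron-Berner extension being genuinely norm-preserving and linear, so that the gap $|\tilde{g}(x_0^{**})-\tilde{P}(x_0^{**})|$ is controlled by $\|g-P\|_B$, which is exactly what Lemma \ref{lm2.1} secures.
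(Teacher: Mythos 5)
Your proposal is correct and follows essentially the same route as the paper's proof: extract a limit character $\tau\in M_{H(B)}$ of the evaluation net via compactness of the spectrum, note $\hat{f}(\tau)=\lambda$, and then upgrade polynomial-star convergence to convergence of $g(x_\alpha)$ for all $g\in A_u(B)$ using uniform density of $P(X)$ in $A_u(B)$ together with Lemma \ref{lm2.1}. The only difference is that you spell out explicitly (via the three-term estimate and the isometry/linearity of the Aron--Berner extension) the step the paper compresses into one sentence.
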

\begin{proof}
Suppose that $\lambda \in Cl_B^{\mathcal{P}}(f, x_0^{**})$. Then there exists a net $(x_{\alpha}) \subset B$ such that $f(x_{\alpha})\to \lambda$ and $x_{\alpha} \to x_0^{**}$ in the topology $w(X^{**}, P(X))$. Without loss of generality we may assume that $x_{\alpha} \to \tau \in M_{H(B)}$, because the spectrum $M_{H(B)}$ is compact. Then $\hat{f}(\tau)=\lim_{\alpha} f(x_{\alpha})=\lambda$. Moreover, for every $g \in A_u(B)$, $$\hat{g}(\tau)=\lim_{\alpha} g(x_{\alpha})=\tilde{g}(x_0^{**}),$$ where the second equality holds because the polynomials on $X$ are uniformly dense in $A_u(B)$, and because $\tilde{g} \in A_u(B^{**})$. Thus $\tau \in M_{x_0^{**}}^{\mathcal{P}}$, so $\lambda \in \hat{f}(M_{x_0^{**}}^{\mathcal{P}})$. 
\end{proof}

As a consequence of Theorem \ref{thm2.2}, one can see that the larger the fiber $M_{x_0^{**}}^{\mathcal{P}}$, the harder it is to get a cluster value theorem at $x_0^{**}$. And the smaller the images of such fiber under the Gelfand transforms of functions $f \in H(B)$, the easier it is to obtain the desired theorem. The following consequence is intuitively clear too.

\begin{corollary}
Let $x_0^{**} \in \bar{B}^{**}$ and $f\in H(B)$. If $\hat{f}$ is constant in $M_{x_0^{**}}^{\mathcal{P}}$, then $f$ extends to $x_0^{**}$ with $w(X^{**}, P(X))$-continuity. That is, if $\hat{f}(M_{x_0^{**}}^{\mathcal{P}})=\{\lambda\}$, then every net $(x_{\alpha})$
$w(X^{**}, P(X))$-convergent to $x_0^{**}$ satisfies that $\lim f(x_{\alpha})=\lambda$.
\end{corollary}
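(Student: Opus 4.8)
The plan is to deduce this directly from Theorem \ref{thm2.2}, using only the boundedness of $f$ and the compactness of bounded subsets of $\mathbb{C}$. First I would observe that the hypothesis $\hat{f}(M_{x_0^{**}}^{\mathcal{P}})=\{\lambda\}$, combined with the inclusion \eqref{eq2.3}, forces $Cl_B^{\mathcal{P}}(f, x_0^{**})\subset\{\lambda\}$. Since polynomial cluster sets are nonempty, in fact $Cl_B^{\mathcal{P}}(f, x_0^{**})=\{\lambda\}$. The remaining content of the corollary is then the elementary fact that a function whose cluster set at a point reduces to a single value must actually converge to that value along every approximating net.

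To make this precise, I would argue by contradiction. Suppose $(x_{\alpha})\subset B$ converges to $x_0^{**}$ in the $w(X^{**}, P(X))$ topology but $f(x_{\alpha})$ does not tend to $\lambda$. Then there is a neighborhood $U$ of $\lambda$ in $\mathbb{C}$ and a subnet $(x_{\alpha_{\beta}})$ with $f(x_{\alpha_{\beta}})\notin U$ for all $\beta$. Because $f \in H^{\infty}(B)$, the scalars $f(x_{\alpha_{\beta}})$ lie in the closed disk of radius $\|f\|$, which is compact, so after passing to a further subnet I may assume $f(x_{\alpha_{\beta}})\to\mu$ for some $\mu\in\mathbb{C}\setminus U$; in particular $\mu\neq\lambda$.

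Finally, any subnet of a $w(X^{**}, P(X))$-convergent net converges to the same limit, so $(x_{\alpha_{\beta}})$ still tends to $x_0^{**}$ in the polynomial-star topology. By the very definition of the polynomial cluster set this gives $\mu\in Cl_B^{\mathcal{P}}(f, x_0^{**})=\{\lambda\}$, contradicting $\mu\neq\lambda$. Hence $f(x_{\alpha})\to\lambda$, which is exactly the asserted $w(X^{**}, P(X))$-continuous extension. I do not expect a genuine obstacle here: the only points requiring (routine) care are invoking the boundedness of $f$ to supply the compactness needed to extract $\mu$, and the stability of polynomial-star convergence under passage to subnets, both of which are standard.
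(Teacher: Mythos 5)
Your proof is correct and follows essentially the same route as the paper: both extract a convergent subnet of the bounded net $(f(x_{\alpha}))$ and use Theorem \ref{thm2.2} to identify its limit with the single value $\lambda$ in $\hat{f}(M_{x_0^{**}}^{\mathcal{P}})$. Your contradiction framing merely makes explicit the standard step (implicit in the paper) that a bounded net all of whose convergent subnets tend to $\lambda$ must itself converge to $\lambda$.
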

\begin{proof}
Let us see that if $(x_{\alpha})$ converges to $x_0^{**}$ in the topology $w(X^{**}, P(X))$, it has a subnet $(x_{\alpha_i})$ such that $\lim_i f(x_{\alpha_i})=\lambda$. Since $(f(x_{\alpha}))$ is bounded, there is a subnet $(x_{\alpha_i})$ such that $(f(x_{\alpha_i}))_i$ converges to a certain $\beta \in \mathbb{C}$. Then Theorem \ref{thm2.2} implies that there exists $\psi \in M_{x_0^{**}}^{\mathcal{P}}$ such that $\hat{f}(\psi)=\beta$. However, $\hat{f}(\psi) \in \hat{f}(M_{x_0^{**}}^{\mathcal{P}})=\{\lambda\}$. Therefore $\hat{f}(\psi)=\lambda$ and thus $f$ extends to $x_0^{**}$ with $w(X^{**}, P(X))$-continuity.
\end{proof}

In the same way we can prove the following result related to the original cluster value problem. Recall that weak-star continuous functions are always norm-continuous. 


\begin{proposition}
If $x_0^{**} \in \bar{B}^{**}$ and $f\in H(B)$ satifies that $\hat{f}$ is constant in $M_{x_0^{**}}$, then $f$ extends to a norm-continuous function $f_0$ on $B\cup \{x_0^{**}\}$ which is $w(X^{**}, X^*)$-continuous at $x_0^{**}$.
\end{proposition}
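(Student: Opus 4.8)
The plan is to mirror the proof of Theorem~\ref{thm2.2} and of the Corollary preceding this proposition, replacing the polynomials $P(X)$ and the topology $w(X^{**},P(X))$ by the functionals $X^*$ and the weak-star topology $w(X^{**},X^*)$, and then to read off norm-continuity from weak-star continuity at no extra cost. First I would record the original-cluster-value analogue of Theorem~\ref{thm2.2}, namely the inclusion $Cl_B(f,x_0^{**})\subset\hat{f}(M_{x_0^{**}})$. The argument is verbatim that of Theorem~\ref{thm2.2}: given $\lambda\in Cl_B(f,x_0^{**})$, choose a net $(x_\alpha)\subset B$ with $f(x_\alpha)\to\lambda$ and $x_\alpha\to x_0^{**}$ in $w(X^{**},X^*)$, and by compactness of $M_{H(B)}$ pass to a subnet with $x_\alpha\to\tau\in M_{H(B)}$, so that $\hat{f}(\tau)=\lambda$. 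The only change is that one now tests against $x^*\in X^*\subset A(B)$ instead of against polynomials: for every $x^*\in X^*$ one has $\hat{x^*}(\tau)=\lim_\alpha x^*(x_\alpha)=x_0^{**}(x^*)$ by weak-star convergence, which says precisely that $\pi(\tau)=x_0^{**}$, i.e. $\tau\in M_{x_0^{**}}$. Hence $\lambda\in\hat{f}(M_{x_0^{**}})$.

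Next, writing $\hat{f}(M_{x_0^{**}})=\{\lambda\}$, I would set $f_0:=f$ on $B$ and $f_0(x_0^{**}):=\lambda$, and establish $w(X^{**},X^*)$-continuity at $x_0^{**}$ by the subnet device of the Corollary. Let $(x_\alpha)\subset B$ converge to $x_0^{**}$ in $w(X^{**},X^*)$. Since $f\in H^{\infty}(B)$, the scalars $f(x_\alpha)$ are bounded, so every subnet has a further subnet along which $f$ converges to some $\beta\in\mathbb{C}$; the inclusion just proved gives $\beta\in Cl_B(f,x_0^{**})\subset\{\lambda\}$, whence $\beta=\lambda$. As every subnet has a further subnet with the same limit $\lambda$, the full net satisfies $f(x_\alpha)\to\lambda$, which is exactly $w(X^{**},X^*)$-continuity of $f_0$ at $x_0^{**}$.

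Finally, the norm-continuity of $f_0$ on $B\cup\{x_0^{**}\}$ should follow for free. On $B$ the function $f_0=f$ is holomorphic and hence norm-continuous. At $x_0^{**}$ I would use the remark that norm convergence in $X^{**}$ implies $w(X^{**},X^*)$-convergence, since $|(y-x_0^{**})(x^*)|\le\|y-x_0^{**}\|\,\|x^*\|$ for all $x^*\in X^*$: any net in $B\cup\{x_0^{**}\}$ converging to $x_0^{**}$ in norm also converges weak-star, so the previous paragraph forces the values of $f_0$ to tend to $\lambda$. I expect the only delicate point to be exactly this last one --- recognising that passing to the strictly coarser weak-star topology yields the \emph{stronger} continuity conclusion, so that no separate norm estimate is needed --- together with the boundedness-and-subnet trick that upgrades a constraint on accumulation points into genuine convergence of the whole net. (If $x_0^{**}\notin X$ the norm statement is nontrivial only for the eventually-constant nets, and if $x_0^{**}\in B$ one checks $f(x_0^{**})=\lambda$ directly from $\delta_{x_0^{**}}\in M_{x_0^{**}}$, so no inconsistency arises.)
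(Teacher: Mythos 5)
Your proposal is correct and follows essentially the same route the paper intends: the paper proves this Proposition by remarking that it goes "in the same way" as the preceding Corollary (i.e., the inclusion $Cl_B(f,x_0^{**})\subset\hat{f}(M_{x_0^{**}})$ plus the boundedness-and-subnet argument, with $X^*$ and $w(X^{**},X^*)$ in place of $P(X)$ and $w(X^{**},P(X))$), and then invokes exactly your final observation that continuity with respect to the coarser weak-star topology automatically yields norm-continuity. Your treatment of the edge cases ($x_0^{**}\in B$ via $\delta_{x_0^{**}}\in M_{x_0^{**}}$, and $x_0^{**}\notin X$) is sound and only makes explicit what the paper leaves implicit.
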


Examples of more comparisons between the cluster value problem and the polynomial cluster value problem are the following. First, if $H(B)$ is a uniform algebra between $A_u(B)$ and $H^{\infty}(B)$, the cluster value problem for $H(B)$ coincides with the polynomial cluster value problem for the same algebra when $X$ is a finite-dimensional Banach space. The reason is that in that case $A_u(B)=A(B)$, and hence the polynomial-star topology coincides with the weak-star topology in $X^{**}=X$. In general, whenever $A_u(B)=A(B)$, we have that the cluster value problem for $H(B)$ coincides with the polynomial cluster value problem for the same algebra. As already mentioned, this is the case for spaces of continuous functions $C(K)$ with $K$ compact, Hausdorff and scattered, and for $c_0$. It is meanwhile shown in \cite[p.~58]{ACG} that this is the case for the dual of the Tsirelson space.

\medskip

In view of Theorem \ref{thm2.2}, to prove the polynomial cluster value theorem for $H(B)$ at $x_0^{**}$ it is enough to show the reverse inclusion to the one in Equation \ref{eq2.3}, for all $f\in H(B)$. Or we can show the algebraic form of the polynomial cluster value problem that we shall present shortly in Theorem \ref{a-pcvp}. Let us first see the following algebraic version of the original cluster value problem that improves \cite[Lemma 2.3]{ACGLM}. 

\begin{theorem}\label{a-cvp}
There is a cluster value theorem for $H(B)$ at $x^{**} \in \bar{B}^{**}$ if and only if, for every finite family $f_1, \cdots, f_{n-1} \in A(B)$ and $f_n \in H(B)$ such that there exists $\delta>0$ for which $|f_1(x)|+\cdots+|f_n(x)|\geq\delta$ for all $x \in B$, it holds that $\hat{f_1}, \cdots, \hat{f_n}$ have no common zeroes in $M_{x^{**}}$.
\end{theorem}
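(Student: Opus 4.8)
The plan is to prove the two implications separately, using throughout that, by the weak-star analogue of Theorem \ref{thm2.2} (established in \cite{ACGLM}), the inclusion $Cl_B(f,x^{**})\subseteq \hat f(M_{x^{**}})$ always holds; hence a cluster value theorem at $x^{**}$ is equivalent to the reverse inclusion $\hat f(M_{x^{**}})\subseteq Cl_B(f,x^{**})$ for every $f\in H(B)$. The two facts I would isolate first are: (i) every $g\in A(B)$ has a weak-star continuous Aron--Berner extension $\tilde g$ on $\bar B^{**}$, so that $\hat g$ is constant on each fiber with $\hat g(\tau)=\tilde g(x^{**})$ for all $\tau\in M_{x^{**}}$; and (ii) by the very definition of $M_{x^{**}}=\pi^{-1}(\{x^{**}\})$, every $\tau\in M_{x^{**}}$ satisfies $\tau(x^*)=x^{**}(x^*)$ for all $x^*\in X^*$. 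Both follow from the generation of $A(B)$ by $X^*$ and $1$ together with the multiplicativity of characters.

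For the forward implication, I would assume the cluster value theorem at $x^{**}$ and take a family $f_1,\dots,f_{n-1}\in A(B)$, $f_n\in H(B)$ with $\sum_{i=1}^n|f_i(x)|\ge\delta$ on $B$. Arguing by contradiction, suppose some $\tau\in M_{x^{**}}$ is a common zero of $\hat f_1,\dots,\hat f_n$. Then $\hat f_n(\tau)=0$, so by the cluster value theorem $0\in Cl_B(f_n,x^{**})$, and there is a net $(x_\alpha)\subset B$ with $x_\alpha\to x^{**}$ weak-star and $f_n(x_\alpha)\to 0$. The point of requiring $f_1,\dots,f_{n-1}\in A(B)$ is that along this same net fact (i) forces $f_i(x_\alpha)=\tilde f_i(x_\alpha)\to\tilde f_i(x^{**})=\hat f_i(\tau)=0$ for each $i<n$; hence $\sum_i|f_i(x_\alpha)|\to 0$, contradicting the lower bound $\delta$. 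This is precisely where the asymmetry in the statement enters, and explains why only the last function needs to lie in $H(B)$.

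For the converse, assume the algebraic condition and fix $f\in H(B)$ and $\lambda\in\hat f(M_{x^{**}})$; subtracting a constant I may take $\lambda=0$, so $\hat f(\tau)=0$ for some $\tau\in M_{x^{**}}$, and I must produce a net in $B$ converging weak-star to $x^{**}$ along which $f\to 0$. If no such net existed, then $0\notin Cl_B(f,x^{**})$, which yields a basic weak-star neighborhood $U=\{y^{**}:|y^{**}(x_j^*)-x^{**}(x_j^*)|<r,\ j=1,\dots,k\}$ of $x^{**}$ and an $\epsilon>0$ with $|f(x)|\ge\epsilon$ for all $x\in B\cap U$. Setting $g_j:=x_j^*|_B-x^{**}(x_j^*)\cdot 1\in A(B)$, a short case split shows $|f(x)|+\sum_{j=1}^k|g_j(x)|\ge\min(\epsilon,r)>0$ on all of $B$. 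Thus $g_1,\dots,g_k,f$ is an admissible family for the algebraic condition and so has no common zero in $M_{x^{**}}$. But fact (ii) gives $\hat g_j(\tau)=\tau(x_j^*)-x^{**}(x_j^*)=0$ for every $j$, while $\hat f(\tau)=0$ by the choice of $\tau$, so $\tau$ is a common zero --- a contradiction. Hence $0\in Cl_B(f,x^{**})$, and the cluster value theorem follows.

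I expect the only delicate point to be the reverse direction's passage from ``$0$ is not an accumulation value'' to the uniform lower bound: one must unwind the definition of the weak-star cluster set into a single basic neighborhood and then absorb the finitely many separating functionals, suitably shifted, into functions of $A(B)$ that automatically vanish on the fiber. Establishing fact (i) carefully --- weak-star continuity of $\tilde g$ on the bounded set $\bar B^{**}$, obtained by uniformly approximating $g$ by polynomials in $X^*$ --- is the other step that deserves attention, though it is routine.
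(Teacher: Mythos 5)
Your proposal is correct and takes essentially the same route as the paper's proof: the forward direction contradicts the uniform lower bound along a net supplied by the cluster value theorem, using weak-star continuity of the extensions of the $A(B)$-functions, and the reverse direction builds the admissible family from the shifted functionals $x_j^*-x^{**}(x_j^*)$ cutting out a basic weak-star neighborhood, with $\min(\epsilon,r)$ playing exactly the role of the paper's $\delta=\min\{c,\varepsilon\}$. The only cosmetic difference is that you phrase the converse as directly verifying the reverse inclusion for a fixed $f$ rather than assuming the theorem fails, which is logically the same argument.
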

\begin{proof}
$\Rightarrow$) If $\hat{f_1}, \cdots, \hat{f_n}$ had a common zero in $M_{x^{**}}$, there would be $\phi \in M_{x^{**}}$ such that $\hat{f_j}(\phi)=0$, $1\leq j \leq n$. In particular, $f_j(x^{**})=\hat{f_j}(\phi)=0$, $1\leq j \leq n-1$. Also, since $\hat{f_n}(M_{x^{**}})=Cl_B(f_n, x^{**})$, we obtain that there exists $(x_{\alpha})\subset B$ such that $x_{\alpha}\xrightarrow{w^*}x^{**}$ and $f_n(x_{\alpha})\to0$. Thus $\inf_{x \in B} \sum_1^n |f_j(x)|=0$, a contradiction. 

\medskip

$\Leftarrow$) If the cluster value theorem for $H(B)$ fails at $x^{**}$, then, after substracting a constant if necessary, we can find $g \in H(B)$ such that $0 \in \hat{g}(M_{x^{**}})$ but $0 \notin Cl_B(g, x^{**})$. Let $\phi \in M_{x^{**}}$ be such that $\hat{g}(\phi)=0$. Also, let $c,\; \varepsilon>0$ and $L_1, \cdots, L_{n-1} \in X^*$ be such that if $x \in B$ and $|L_j(x)-x^{**}(L_j)|<\varepsilon$ for $1\leq j \leq n-1$ then $|g(x)|\geq c$. Taking $\delta=\min\{c, \varepsilon\}$ we obtain that $|g(x)|\geq \delta$ when $|L_j(x)-x^{**}(L_j)|<\delta$, $1\leq j \leq n-1$. Thus, taking $f_j=L_j-x^{**}(L_j)$ for $1\leq j \leq n-1$, and $f_n=g$, we get that $\sum_1^n |f_j(x)|\geq \delta$ for all $x \in B$. Nevertheless $\hat{f_j}(\phi)=\phi(L_j-x^{**}(L_j))=0$ for $1\leq j \leq n-1$, while $\hat{f_n}(\phi)=\hat{g}(\phi)=0$, i.e. $\phi$ is a common zero in $M_{x^{**}}$ of $\hat{f_1}, \cdots, \hat{f_n}$. 
\end{proof}

The corresponding characterization of the polynomial cluster value theorem replaces the role of $X^{*}$ by $P(X)$. The algebra $A(B)$ generated by $X^{*}$ is substituted by $A_u(B)$ which is generated by $P(X)$. And a polynomial fiber takes the place of the original fiber at the same point.

\begin{theorem}\label{a-pcvp}
There is a polynomial cluster value theorem for $H(B)$ at $x^{**} \in \bar{B}^{**}$ if and only if, for every finite family $f_1, \cdots, f_{n-1} \in A_u(B)$ and $f_n \in H(B)$ such that there exists $\delta>0$ for which $|f_1(x)|+\cdots+|f_n(x)|\geq\delta$ for all $x \in B$, it holds that $\hat{f_1}, \cdots, \hat{f_n}$ have no common zeroes in $M_{x^{**}}^{\mathcal{P}}$.
\end{theorem}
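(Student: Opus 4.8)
The plan is to mirror the proof of Theorem \ref{a-cvp} almost verbatim, substituting $A_u(B)$ for $A(B)$, the polynomial fiber $M_{x^{**}}^{\mathcal{P}}$ for $M_{x^{**}}$, the polynomial-star topology $w(X^{**},P(X))$ for the weak-star topology, and the Aron--Berner extension for the bidual pairing. The statement is an equivalence, so I would prove the two implications separately.

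For the forward implication ($\Rightarrow$), I assume a polynomial cluster value theorem holds at $x^{**}$ and argue by contradiction. Suppose $\hat{f_1},\dots,\hat{f_n}$ had a common zero $\phi \in M_{x^{**}}^{\mathcal{P}}$. For $1 \leq j \leq n-1$ the functions $f_j \in A_u(B)$, so by the definition of the polynomial fiber one has $\hat{f_j}(\phi) = \tilde{f_j}(x^{**})$; hence $\tilde{f_j}(x^{**}) = 0$. For the last function, since the polynomial cluster value theorem gives $\hat{f_n}(M_{x^{**}}^{\mathcal{P}}) = Cl_B^{\mathcal{P}}(f_n, x^{**})$ and $\hat{f_n}(\phi)=0$, there is a net $(x_\alpha) \subset B$ with $x_\alpha \to x^{**}$ in $w(X^{**},P(X))$ and $f_n(x_\alpha) \to 0$. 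Because each $f_j$ ($j \leq n-1$) lies in $A_u(B)$, it is a uniform limit of polynomials, so $f_j(x_\alpha) \to \tilde{f_j}(x^{**}) = 0$ along this net. Therefore $\sum_{j=1}^n |f_j(x_\alpha)| \to 0$, contradicting the uniform lower bound $\delta>0$. The only real care needed here, versus the linear case, is justifying that polynomial-star convergence forces $f_j(x_\alpha) \to \tilde{f_j}(x^{**})$ for $f_j \in A_u(B)$ — this is exactly the computation already performed inside the proof of Theorem \ref{thm2.2}, so I would cite it rather than redo it.

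For the reverse implication ($\Leftarrow$), I assume the no-common-zero condition and suppose the polynomial cluster value theorem fails at $x^{**}$. Since the inclusion $Cl_B^{\mathcal{P}}(f, x^{**}) \subset \hat{f}(M_{x^{**}}^{\mathcal{P}})$ always holds by Theorem \ref{thm2.2}, failure means the reverse inclusion fails for some $g \in H(B)$; after subtracting a scalar I may assume $0 \in \hat{g}(M_{x^{**}}^{\mathcal{P}})$ but $0 \notin Cl_B^{\mathcal{P}}(g, x^{**})$. Pick $\phi \in M_{x^{**}}^{\mathcal{P}}$ with $\hat{g}(\phi)=0$. The fact that $0$ is not a polynomial cluster value means, by the definition of the polynomial-star topology as the initial topology of the $\tilde{P}$, that there exist finitely many polynomials $P_1,\dots,P_{n-1} \in P(X)$ and constants $c,\varepsilon>0$ such that any $x \in B$ with $|P_j(x) - \tilde{P}_j(x^{**})| < \varepsilon$ for all $j$ satisfies $|g(x)| \geq c$. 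Setting $f_j := P_j - \tilde{P}_j(x^{**}) \in A_u(B)$ and $f_n := g$, and $\delta := \min\{c,\varepsilon\}$, I would verify $\sum_{j=1}^n |f_j(x)| \geq \delta$ on $B$: if some $|f_j(x)| \geq \delta$ for $j \leq n-1$ the bound is immediate, and otherwise all the polynomial constraints hold so $|g(x)| \geq c \geq \delta$. Finally $\hat{f_j}(\phi) = \phi(P_j) - \tilde{P}_j(x^{**}) = \tilde{P}_j(x^{**}) - \tilde{P}_j(x^{**}) = 0$ since $\phi \in M_{x^{**}}^{\mathcal{P}}$ evaluates polynomials at the Aron--Berner extension, and $\hat{f_n}(\phi) = \hat{g}(\phi) = 0$, so $\phi$ is a common zero — contradicting the hypothesis.

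The main obstacle, and the only place the proof genuinely departs from the linear template, is the reverse direction's extraction of a finite polynomial neighborhood witnessing that $0 \notin Cl_B^{\mathcal{P}}(g,x^{**})$, together with the identity $\phi(P_j) = \tilde{P}_j(x^{**})$. In Theorem \ref{a-cvp} the neighborhoods come from the weak-star topology and $\phi(L_j) = x^{**}(L_j)$ is the literal definition of the fiber; here I must instead use that $w(X^{**},P(X))$ is generated by the $\tilde{P}$, so a basic neighborhood of $x^{**}$ intersected with $B$ has precisely the form $\{x : |P_j(x) - \tilde{P}_j(x^{**})| < \varepsilon\}$, and I must invoke the definition of $\delta_{x_0^{**}} \in M_{A_u(B)}$ (which evaluates the Aron--Berner extension) to get $\phi(P_j) = \tilde{P}_j(x^{**})$ for $\phi$ in the polynomial fiber. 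Both facts are recorded in Section 2's setup, so the argument goes through; it is essentially bookkeeping once one is careful that $P_j$ restricted to $B$ lands in $A_u(B)$ and that the constant $\tilde{P}_j(x^{**})$ is the correct shift.
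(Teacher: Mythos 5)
Your proposal is correct and follows essentially the same route as the paper's own proof: both directions proceed exactly as in the linear case, with the forward implication using density of polynomials in $A_u(B)$ to pass polynomial-star convergence to functions in $A_u(B)$, and the reverse implication extracting a basic polynomial-star neighborhood witnessing $0 \notin Cl_B^{\mathcal{P}}(g,x^{**})$ and shifting the polynomials by their Aron--Berner values at $x^{**}$. Your write-up is in fact slightly more explicit than the paper's (which compresses the neighborhood extraction and the $\delta$-verification into ``as before''), but there is no substantive difference.
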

\begin{proof}
$\Rightarrow$) If $\hat{f_1}, \cdots, \hat{f_n}$ had a common zero in $M_{x^{**}}^{\mathcal{P}}$, there would be $\phi \in M_{x^{**}}^{\mathcal{P}}$ such that $\hat{f_j}(\phi)=0$, $1\leq j \leq n$. In particular, $f_j(x^{**})=\hat{f_j}(\phi)=0$, $1\leq j \leq n-1$. Also, since $\hat{f_n}(M_{x^{**}}^{\mathcal{P}})=Cl_B^{\mathcal{P}}(f_n, x^{**})$, we obtain that there exists $(x_{\alpha})\subset B$ such that $x_{\alpha}\to x^{**}$ in the polynomial-star topology and $f_n(x_{\alpha})\to 0$. Since the polynomials are dense in $A_u(B)$ \cite[p.~56]{ACG}, we get that $\inf_{x \in B} \sum_1^n |f_j(x)|=0$, a contradiction.

\medskip

$\Leftarrow$) If the polynomial cluster value theorem for $H(B)$ fails at $x^{**}$, then there exists $g \in H(B)$ such that $0 \in \hat{g}(M_{x^{**}}^{\mathcal{P}})$ but $0 \notin Cl_B^{\mathcal{P}}(g, x^{**})$. Let $\phi \in M_{x^{**}}^{\mathcal{P}}$ be such that $\hat{g}(\phi)=0$. Also, let $c,\; \varepsilon>0$ and $P_1, \cdots, P_{n-1} \in P(X)$ be such that if $x \in B$ and $|P_j(x)-\widetilde{P_j}(x^{**})|<\varepsilon$ for $1\leq j \leq n-1$ then $|g(x)|\geq c$. As before, we can take $\delta=\min\{c, \varepsilon\}$, $f_j=P_j-\widetilde{P_j}(x^{**})$ for $1\leq j \leq n-1$, and $f_n=g$, to get that $\sum_1^n |f_j(x)|\geq \delta$ for all $x \in B$. Still, $\hat{f_j}(\phi)=\phi(P_j-\widetilde{P_j}(x^{**}))=0$ for $1\leq j \leq n-1$, while $\hat{f_n}(\phi)=\hat{g}(\phi)=0$, i.e. $\phi$ is a common zero in $M_{x^{**}}^{\mathcal{P}}$ of $\hat{f_1}, \cdots, \hat{f_n}$. 
\end{proof}

\medskip

To finish this section, let us see the following direct proof of the fact that the Corona theorem for $H(B)$ implies the polynomial cluster value theorem for $H(B)$ at all points $x^{**} \in \bar{B}^{**}$.

\begin{theorem}
If $B$ is dense in $M_{H(B)}$, then for every $x_0^{**} \in \bar{B}^{**}$ and $f \in H(B)$, $Cl_B^{\mathcal{P}}(f, x_0^{**})= \hat{f}(M_{x_0^{**}}^{\mathcal{P}})$.
\end{theorem}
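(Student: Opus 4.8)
The plan is to combine Theorem \ref{thm2.2} with the density hypothesis, reducing everything to one inclusion. Theorem \ref{thm2.2} already supplies $Cl_B^{\mathcal{P}}(f, x_0^{**}) \subset \hat{f}(M_{x_0^{**}}^{\mathcal{P}})$, so the only thing left to establish is the reverse containment $\hat{f}(M_{x_0^{**}}^{\mathcal{P}}) \subset Cl_B^{\mathcal{P}}(f, x_0^{**})$, for every $f \in H(B)$ and every $x_0^{**} \in \bar{B}^{**}$. I would fix a point $\lambda$ in the right-hand side and pick $\phi \in M_{x_0^{**}}^{\mathcal{P}}$ with $\hat{f}(\phi)=\lambda$, then manufacture from $\phi$ a net in $B$ that witnesses $\lambda \in Cl_B^{\mathcal{P}}(f, x_0^{**})$.

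The hypothesis that $B$ is dense in $M_{H(B)}$ means precisely that the evaluation characters $\{\delta_x : x \in B\}$ are weak-star dense in $M_{H(B)}$. Hence $\phi$ is a weak-star limit of some net $(\delta_{x_\beta})$ with $(x_\beta) \subset B$, and unravelling the weak-star topology of $H(B)^*$ this says exactly that $h(x_\beta) \to \hat{h}(\phi)$ for every $h \in H(B)$. The key point of the argument is that this single net does all the work, because of the chain of inclusions $P(X) \subset A_u(B) \subset H(B)$. Evaluating at $h=f$ gives $f(x_\beta) \to \hat{f}(\phi) = \lambda$, while evaluating at each polynomial $P \in P(X)$ gives $P(x_\beta) \to \hat{P}(\phi)$; since $\phi$ lies in the polynomial fiber $M_{x_0^{**}}^{\mathcal{P}}$, we have $\hat{P}(\phi) = \tilde{P}(x_0^{**})$, so that $P(x_\beta) \to \tilde{P}(x_0^{**})$ for all $P \in P(X)$, which is just convergence $x_\beta \to x_0^{**}$ in the topology $w(X^{**}, P(X))$. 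Together these two facts exhibit $(x_\beta)$ as a net certifying $\lambda \in Cl_B^{\mathcal{P}}(f, x_0^{**})$.

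I expect no genuine obstacle: the entire content is repackaging the emptiness of the Corona through the definitions of the polynomial fiber and the polynomial-star topology, with no appeal to Lemma \ref{lm2.1} or to the algebraic reformulation of Theorem \ref{a-pcvp}. The only steps requiring care are the bookkeeping ones, namely that $\hat{f}(\delta_{x_\beta})$ and $\hat{P}(\delta_{x_\beta})$ reduce to the plain evaluations $f(x_\beta)$ and $P(x_\beta)$, and that the membership $\phi \in M_{x_0^{**}}^{\mathcal{P}}$ is exactly the ingredient that converts $\hat{P}(\phi)$ into the Aron-Berner value $\tilde{P}(x_0^{**})$. Once these identifications are in place the reverse inclusion, and hence the stated equality, follows immediately.
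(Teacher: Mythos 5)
Your proposal is correct and is essentially the paper's own argument: both reduce to the reverse inclusion via Theorem \ref{thm2.2}, use the density hypothesis to approximate a character $\phi \in M_{x_0^{**}}^{\mathcal{P}}$ by evaluations $\delta_{x_\beta}$ with $x_\beta \in B$, and then read off $f(x_\beta)\to\hat{f}(\phi)$ together with polynomial-star convergence $x_\beta \to x_0^{**}$ from the membership of $\phi$ in the polynomial fiber. The only cosmetic difference is that the paper verifies convergence against all $g \in A_u(B)$ while you test only against $P \in P(X)$, which suffices since the topology $w(X^{**},P(X))$ is defined by polynomials.
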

\begin{proof}
Let $x_0^{**} \in \bar{B}^{**}$ and $f \in H(B)$. Due to Theorem \ref{thm2.2}, it is enough to show that $\hat{f}(M_{x_0^{**}}^{\mathcal{P}}) \subset Cl_B^{\mathcal{P}}(f, x_0^{**})$. Given $\tau \in M_{x_0^{**}}^{\mathcal{P}}$, we can find $(x_{\alpha})\subset B$ such that $\delta_{x_{\alpha}}\to \tau$ in $M_{H(B)}$. In particular, since $A_u(B)\subset H(B)$, if $g \in A_u(B)$,
$$g(x_{\alpha}) \to \hat{g}(\tau)= \tilde{g}(x_0^{**}),$$
so $x_{\alpha} \to x_0^{**}$ in the topology $w(X^{**}, P(X))$. Since also $f(x_{\alpha})\to \hat{f}(\tau)$, we obtain that $\hat{f}(\tau) \in Cl_B^{\mathcal{P}}(f, x_0^{**})$. 
\end{proof}

\section{Polynomial cluster value problem for spaces of continuous functions}

Despite the commonalities between the original cluster value problem and the polynomial cluster value problem, we have found some features of the latter that up to now make it distinctive. In this section we focus on the polynomial cluster value problem for the algebra $H^{\infty}(B)$ for $B$ the ball of a space of continuous functions.


\medskip

For the original cluster value problem, the size and structure of fibers for some spectra of subalgebras of $H^{\infty}(B)$ have been recently studied in \cite{ACLM} and \cite{AFGM}. In contrast with the case of the original cluster value problem, we do not know if the union of the polynomial fibers  $M_{x_0^{**}}^{\mathcal{P}}$, with $x_0^{**} \in \bar{B}^{**}$, is all of $M_{H(B)}$. Still, the polynomial cluster value problem is in general nontrivial, as illustrated by the size of the polynomial fibers in the following example \cite[pp.~1565-1567]{JO}. 

\begin{example}[Johnson, Ortega Castillo]
If $K$ is an infinite compact Hausdorff space then each of the polynomial fibers $M_{f_0}^{\mathcal{P}}$, for $f_0 \in B_{C(K)}$ and the algebra $H^{\infty}(B_{C(K)})$,  contains a holomorphic copy of $B_{\ell_{\infty}}$.
\end{example}

Proposition \ref{pr3.3} below extends the last example to all polynomial fibers $M_{f_0^{**}}^{\mathcal{P}}$ with $f_0^{**} \in B_{C(K)^{**}}$. As in \cite[Lemma 3.5]{JO}, we use that $C(K)^{**}$ is a commutative $C^*$-algebra that extends the $C^*$ structure of $C(K)$ to obtain M\"obius type biholomorphisms in $B_{C(K)^{**}}$. Here we shall show two more properties about them for the extension, that they are Lipschitz and $(w(X^{**}, \mathcal{P}(X)),w(X^{**}, \mathcal{P}(X)))$-continuous. We will subsequently use such characteristics to prove Proposition \ref{pr3.3} and Corollary \ref{co3.5} in connection with the polynomial cluster value problem.

\medskip

In what follows, we will use functional calculus notation for the commutative space $C(K)^{**}$. For background on functional calculus in $C^*$-algebras, see \cite{Mr}. To be consistent with our previous notation, we will denote an element of $B_{C(K)^{**}}$ by $f^{**}$. The involution in $C(K)^{**}$ will be denoted by  $\text{ }\bar{}$, since it corresponds to complex conjugation for elements of $C(K)$.

\begin{lemma}\label{lm3.2}
For all $f_0^{**} \in B_{C(K)^{**}}$, the mapping $T_{f_0^{**}}: B_{C(K)^{**}} \to B_{C(K)^{**}}$ given by
\begin{equation}\label{eq3.1}
T_{f_0^{**}}(f^{**})=\frac{f^{**}-f_0^{**}}{1-\bar{f_0^{**}}f^{**}}
\end{equation}

is biholomorphic. Moreover, it is Lipschitz and $(w(X^{**}, \mathcal{P}(X)),w(X^{**}, \mathcal{P}(X)))$-continuous.
\end{lemma}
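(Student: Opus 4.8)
The plan is to exploit the commutative $C^*$-structure of $X^{**}=C(K)^{**}$ throughout, representing it through its Gelfand isomorphism as $C(\Omega)$ for a compact Hausdorff space $\Omega$, so that the involution becomes pointwise conjugation. Writing $s:=\|f_0^{**}\|<1$, at every point of $\Omega$ the element $1-\overline{f_0^{**}}f^{**}$ has modulus at least $1-\|f_0^{**}\|\,\|f^{**}\|>1-s>0$; equivalently, since $\|\overline{f_0^{**}}f^{**}\|\le\|f_0^{**}\|\,\|f^{**}\|<1$, it is invertible by the Neumann series, and $T_{f_0^{**}}$ becomes the pointwise disk automorphism $\omega\mapsto\phi_{f_0^{**}(\omega)}\big(f^{**}(\omega)\big)$ with $\phi_a(z)=(z-a)/(1-\bar a z)$. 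First I would read off that $T_{f_0^{**}}$ maps the open ball into itself from the scalar estimate $|\phi_a(z)|\le(|a|+|z|)/(1+|a||z|)<1$ when $|a|,|z|<1$, and that it is holomorphic because inversion is holomorphic on the invertibles of a Banach algebra. Biholomorphicity then follows from the purely algebraic identity $T_{-f_0^{**}}\circ T_{f_0^{**}}=\mathrm{id}$, verified in the commutative algebra by clearing denominators; the inverse $T_{-f_0^{**}}$ is holomorphic by the same reasoning.

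For the Lipschitz property I would use the identity, valid by commutativity,
\[
T_{f_0^{**}}(f^{**})-T_{f_0^{**}}(g^{**})=\big(1-\overline{f_0^{**}}f_0^{**}\big)\,\big(1-\overline{f_0^{**}}f^{**}\big)^{-1}\big(1-\overline{f_0^{**}}g^{**}\big)^{-1}\,(f^{**}-g^{**}),
\]
obtained by placing the two fractions over a common denominator and simplifying the numerator to $(1-\overline{f_0^{**}}f_0^{**})(f^{**}-g^{**})$. Since $\overline{f_0^{**}}f_0^{**}$ is positive with spectrum in $[0,s^2]$, the self-adjoint element $1-\overline{f_0^{**}}f_0^{**}$ has norm at most $1$, while $\|(1-\overline{f_0^{**}}f^{**})^{-1}\|\le(1-s)^{-1}$ on the open ball; submultiplicativity yields the Lipschitz bound with constant $(1-s)^{-2}$ (the sharper $(1+s)/(1-s)$ also follows from the pointwise bound $|\phi_a'|\le(1+|a|)/(1-|a|)$).

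The hard part is the $(w(X^{**},\mathcal{P}(X)),w(X^{**},\mathcal{P}(X)))$-continuity, because this topology is built from the Aron--Berner extensions of polynomials on $X$ and does not interact pointwise with the Gelfand picture on $\Omega$. Since $w(X^{**},\mathcal{P}(X))$ is the initial topology for the family $\{\tilde P:P\in\mathcal{P}(X)\}$, it suffices to show that $\tilde P\circ T_{f_0^{**}}$ is $w(X^{**},\mathcal{P}(X))$-continuous for every $P\in\mathcal{P}(X)$. I would expand $T_{f_0^{**}}(f^{**})=(f^{**}-f_0^{**})\sum_{n\ge 0}\overline{f_0^{**}}^{\,n}(f^{**})^{n}$; because $\|\overline{f_0^{**}}^{\,n}(f^{**})^{n}\|\le s^{n}$ for every $f^{**}$ in the ball, the partial sums $Q_m$, which are $X^{**}$-valued polynomials in $f^{**}$, converge to $T_{f_0^{**}}$ uniformly in norm on all of $B_{C(K)^{**}}$. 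As $\tilde P$ is uniformly continuous on bounded sets, $\tilde P\circ Q_m\to\tilde P\circ T_{f_0^{**}}$ uniformly, and a uniform limit of $w(X^{**},\mathcal{P}(X))$-continuous functions is again continuous; so it is enough to treat each $Q_m$, and hence each power map $f^{**}\mapsto(f^{**})^{n}$ together with multiplication by the fixed elements $\overline{f_0^{**}}^{\,n},f_0^{**}\in X^{**}$.

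The crux, which I expect to be the main obstacle, is to see that these building blocks preserve $w(X^{**},\mathcal{P}(X))$-convergence. Here I would use that $C(K)$ is a commutative $C^*$-algebra, hence Arens regular, so that the product on $C(K)^{**}$ is precisely the Arens (equivalently Aron--Berner) extension of the product on $C(K)$; consequently $f^{**}\mapsto(f^{**})^{n}$ is the Aron--Berner extension of the $X$-valued polynomial $x\mapsto x^{n}$, while multiplication by a fixed element is weak-star continuous. The delicate point is the compatibility of Aron--Berner extensions with composition: one must check that applying $\tilde P$ after such a power map agrees with the Aron--Berner extension of a single scalar polynomial on $X$, i.e.\ that forming the iterated weak-star limits in blocks (first the products, then $P$) coincides on the diagonal with extending all variables of the associated symmetric multilinear form at once. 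Arens regularity of $C(K)$ is exactly what removes the order-dependence of these iterated limits and makes the two procedures agree; granting this, each $\tilde P\circ Q_m$ is the Aron--Berner extension of a polynomial on $X$ and is therefore $w(X^{**},\mathcal{P}(X))$-continuous by the very definition of the topology, which closes the argument.
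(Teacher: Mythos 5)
Your treatment of the biholomorphy and the Lipschitz estimate is correct and essentially identical to the paper's: the paper also gets invertibility of $1-\bar{f_0^{**}}f^{**}$ from $\|\bar{f_0^{**}}f^{**}\|<1$ and defers biholomorphy to the M\"obius-type argument (your identity $T_{-f_0^{**}}\circ T_{f_0^{**}}=\mathrm{id}$), and it uses exactly your difference identity together with $\|1-|f_0^{**}|^2\|\le 1$ and $\|(1-\bar{f_0^{**}}f^{**})^{-1}\|\le (1-\|f_0^{**}\|)^{-1}$ to obtain the constant $(1-\|f_0^{**}\|)^{-2}$; your Neumann-series bound for the inverse is a harmless substitute for the paper's submultiplicativity computation.

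The polynomial-star continuity is where the genuine gap lies. Your reduction itself is sound, and in one respect it is more scrupulous than the paper's own wording: the paper argues that $\tilde P\circ T_{f_0^{**}}\in A_u(B^{**})$ is a uniform limit of polynomials and is "hence" polynomial-star continuous, whereas arbitrary polynomials on $X^{**}$ need not be $w(X^{**},\mathcal{P}(X))$-continuous (a nonzero functional in $X^{***}$ vanishing on $X$ cannot be, by Davie--Gamelin density of $B$ in $B^{**}$), so one really does need the approximants to be Aron--Berner extensions of polynomials on $X$ --- exactly the requirement you isolate. The problem is that this requirement for your partial sums $Q_m$, namely that $\tilde P\circ Q_m$ coincides with the Aron--Berner extension of its restriction to $B_{C(K)}$, is precisely the content of the paper's Lemma \ref{lm3.4}: there one expands the symmetric $m$-linear form $A$ of each homogeneous piece, substitutes the series of $T_{f_0^{**}}$, and verifies term by term that extending the variables weak-star, one at a time, reproduces $\tilde A(g_{n_1}^{**}(f^{**})^{n_1},\dots,g_{n_m}^{**}(f^{**})^{n_m})$ in the bidual. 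You do not prove this; you "grant" it, citing Arens regularity. But Arens regularity of $C(K)$ only guarantees good behavior (order-independence, symmetry) of the iterated weak-star extensions of a single multilinear form; it does not by itself yield compatibility of the Aron--Berner extension with composition, which is not automatic for Banach-space-valued polynomials and is exactly what must be checked here by the concrete Arens-product computation. So your architecture is right --- it matches the paper's division of labor between Lemma \ref{lm3.2} and Lemma \ref{lm3.4}, and your polynomial truncation even reduces the needed statement to a finite-sum case --- but as written the hardest step of the continuity claim is assumed rather than proved.
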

\begin{proof}


In equation \ref{eq3.1} above, we have that $\|\bar{f_0^{**}}f^{**}\|<1$, so $1-\bar{f_0^{**}}f^{**}$ is certainly invertible. The proof of $T_{f_0^{**}}$ being biholomorphic is analogous to \cite[Lemma 3.2]{JO}.

\medskip

To prove that $T_{f_0^{**}}$ is a Lipschitz function, note that for all $f^{**}, g^{**} \in B_{C(K)^{**}}$,
\begin{align*}
\|T_{f_0^{**}}(f^{**})-T_{f_0^{**}}(g^{**})\|&= \mathlarger{ \| } \frac{(1-|f_0^{**}|^2)(f^{**}-g^{**})}{(1-\bar{f_0^{**}} f^{**})(1-\bar{f_0^{**}} g^{**})} \mathlarger{ \| }. \\
\end{align*}

Since $0\leq 1-|f_0^{**}|^2\leq 1$ then $\| 1-|f_0^{**}|^2\| \leq 1$, so
$$
\|T_{f_0^{**}}(f^{**})-T_{f_0^{**}}(g^{**})\| \leq \| (1-\bar{f_0^{**}} f^{**})^{-1}\| \cdot \|(1-\bar{f_0^{**}} g^{**})^{-1} \| \cdot \|f^{**}-g^{**}\|.
$$

\medskip

Moreover, due again to the submultiplicativity of the norm,
\begin{align*}
\|(1-\bar{f_0^{**}} f^{**})^{-1}\|(1-\|f_0^{**}\|) 
&\leq \|(1-\bar{f_0^{**}} f^{**})^{-1}\|-\|\bar{f_0^{**}}(1-\bar{f_0^{**}} f^{**})^{-1}\|\\
&\leq \|(1-\bar{f_0^{**}} f^{**})^{-1}\|-\|\bar{f_0^{**}}f^{**}(1-\bar{f_0^{**}} f^{**})^{-1}\|\\
&\leq \| (1-\bar{f_0^{**}} f^{**})(1-\bar{f_0^{**}} f^{**})^{-1}\|\\
&\leq 1,
\end{align*}
and similarly $\|(1-\bar{f_0^{**}} g^{**})^{-1}\|(1-\|f_0^{**}\|)\leq 1$, therefore
$$\|T_{f_0^{**}}(f^{**})-T_{f_0^{**}}(g^{**})\| \leq \frac{\|f^{**}-g^{**}\|}{(1-\|f_0^{**}\|)^2},$$
i.e. $T_{f_0^{**}}$ is Lipschitz, and consequently uniformly continuous. 

\medskip

Given $P \in P(X)\subset A_u(B)$, using lemma \ref{lm2.1} we get that the Aron-Berner extension $\tilde{P} \in A_u(B^{**})$. Since $T_{f_0^{**}}$ is holomorphic and uniformly continuous we have that $\tilde{P} \circ T_{f_0^{**}} \in A_u(B^{**})$, so it is the uniform limit of polynomials. Hence $\tilde{P}\circ T_{f_0^{**}}$ is polynomial-star continuous, so given a net $(f_{\alpha}^{**})$ converging to $f^{**}$ in $(w(X^{**}, \mathcal{P}(X))$ we observe that
$$\tilde{P} (T_{f_0^{**}}(f_{\alpha}^{**}))\to \tilde{P} (T_{f_0^{**}}(f^{**})),$$
i.e. $T_{f_0^{**}}$ is $(w(X^{**}, \mathcal{P}(X)),w(X^{**}, \mathcal{P}(X)))$-continuous.
\end{proof}

Let us prove the property in Lemma \ref{lm3.4} below about the M\"obius type biholomorphisms just described, as we will use it to show Proposition \ref{pr3.3}.

\begin{lemma}\label{lm3.4}
For all $\psi \in H^{\infty}(B)$ and $f_0^{**} \in B_{C(K)^{**}}$, $$\widetilde{\mathit{(\tilde{\psi} \circ T_{f_0^{**}}|_{B_{C(K)}})}}=\tilde{\psi} \circ T_{f_0^{**}}$$

\end{lemma}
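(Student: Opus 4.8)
The plan is to prove the identity first for the uniformly continuous case and then to pass to a limit. So I would begin by establishing Lemma \ref{lm3.4} with $\psi$ replaced by an arbitrary $g \in A_u(B)$, where $B=B_{C(K)}$. Here $\tilde g \in A_u(B^{**})$ by Lemma \ref{lm2.1}, and since $g$ is a uniform limit of polynomials $P \in P(X)$, each $\tilde P$ being $w(X^{**},P(X))$-continuous by the very definition of the polynomial-star topology, the function $\tilde g$, as the uniform limit of the $\tilde P$, is itself $w(X^{**},P(X))$-continuous. As $T_{f_0^{**}}$ is $(w(X^{**},P(X)),w(X^{**},P(X)))$-continuous by Lemma \ref{lm3.2}, the composition $\tilde g \circ T_{f_0^{**}}$ is $w(X^{**},P(X))$-continuous; moreover $\tilde g \circ T_{f_0^{**}}|_{B}\in A_u(B)$ (composition of uniformly continuous maps, $T_{f_0^{**}}$ being Lipschitz), so its Aron--Berner extension is also $w(X^{**},P(X))$-continuous. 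Both extensions agree with $\tilde g \circ T_{f_0^{**}}|_{B}$ on the polynomial-star dense set $B$ (\cite[Thm.~2]{DG}), hence they coincide, giving the lemma for $g \in A_u(B)$.

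For general $\psi \in H^{\infty}(B)$ I would introduce the dilations $\psi_r(x):=\psi(rx)$, $0<r<1$, which belong to $A_u(B)$ and satisfy $\widetilde{\psi_r}(x^{**})=\tilde\psi(rx^{**})$ (from the homogeneous expansion). Writing $h:=(\tilde\psi \circ T_{f_0^{**}})|_{B}$ and $h_r:=(\widetilde{\psi_r} \circ T_{f_0^{**}})|_{B}$, the $A_u$ case yields $\widetilde{h_r}=\widetilde{\psi_r}\circ T_{f_0^{**}}$, so for a fixed $x^{**}\in B^{**}$ one gets $\widetilde{h_r}(x^{**})=\tilde\psi(r\,T_{f_0^{**}}(x^{**}))\to\tilde\psi(T_{f_0^{**}}(x^{**}))$ as $r\to 1$, by norm-continuity of $\tilde\psi$ at the interior point $T_{f_0^{**}}(x^{**})$. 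Thus the whole matter reduces to proving that the extension commutes with this limit, namely $\widetilde{h_r}(x^{**})\to\widetilde{h}(x^{**})$, which then forces $\widetilde{h}=\tilde\psi\circ T_{f_0^{**}}$, the desired claim.

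The hard part will be exactly this interchange, because the Aron--Berner extension is \emph{not} continuous under mere bounded pointwise convergence on $B$ (e.g.\ in $c_0$ the forms $(x,y)\mapsto x_iy_i$ tend to $0$ pointwise while their extensions do not). What rescues the argument is that $h_r\to h$ \emph{uniformly on each ball} $\rho\bar B$ with $\rho<1$, and to get this I would use the commutative $C^*$-structure of $C(K)^{**}$: computing $T_{f_0^{**}}$ through functional calculus and applying the scalar Schwarz--Pick estimate $\big|\tfrac{z-a}{1-\bar a z}\big|\le \frac{\rho+|a|}{1+|a|\rho}$ pointwise, with $|a|\le\|f_0^{**}\|<1$ and $|z|\le\rho$, I obtain $T_{f_0^{**}}(\rho\bar B^{**})\subset\rho'\bar B^{**}$ for some $\rho'=\rho'(\rho,\|f_0^{**}\|)<1$. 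Since $\tilde\psi$ is $L$-Lipschitz on $\rho'\bar B^{**}$ by the Cauchy estimates, $|h_r(f)-h(f)|\le L(1-r)\|T_{f_0^{**}}(f)\|\le L\rho'(1-r)$ for $\|f\|\le\rho$, which gives the local-uniform convergence.

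Finally I would convert local-uniform convergence into convergence of the extensions. Fixing $x^{**}$ with $s:=\|x^{**}\|<\rho<1$ and setting $g_r:=h_r-h$, the Cauchy integral over $|\lambda|=\rho$ bounds the $n$-homogeneous part by $\|P_n^{g_r}\|\le\rho^{-n}\sup_{\rho\bar B}|g_r|$; since the Aron--Berner extension is isometric on homogeneous polynomials of each degree and $\widetilde{g_r}=\sum_n\widetilde{P_n^{g_r}}$ on $B^{**}$, I get $|\widetilde{g_r}(x^{**})|\le\big(\sup_{\rho\bar B}|g_r|\big)\sum_n (s/\rho)^n=\frac{\sup_{\rho\bar B}|g_r|}{1-s/\rho}\to 0$. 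Hence $\widetilde{h_r}(x^{**})\to\widetilde{h}(x^{**})$, and comparing with the second paragraph completes the proof. To emphasize, the single genuine obstacle is the failure of pointwise-bounded continuity of the Aron--Berner extension; everything hinges on upgrading to local-uniform convergence, which is precisely where the $C^*$-algebra geometry of $C(K)^{**}$ is indispensable.
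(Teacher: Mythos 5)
Your proposal is correct, but it follows a genuinely different route from the paper's. The paper proves the lemma by direct computation with the Arens extension: it reduces, via the Taylor expansion of $\psi$, to an $m$-homogeneous polynomial $P_m$ with symmetric form $A$, expands $T_{f_0^{**}}$ in its power series $\sum_n g_n^{**}\cdot(f^{**})^n$, substitutes this into $\tilde A$, rearranges the resulting multiple series into $k$-homogeneous pieces, and recognizes each piece as an Aron--Berner extension by invoking the order-independence on the diagonal of the iterated weak-star extension of the symmetric form $A$. You avoid the multilinear bookkeeping entirely: the $A_u(B)$ case is settled softly (both sides are $w(X^{**},P(X))$-continuous, by Lemmas \ref{lm2.1} and \ref{lm3.2} together with the Davie--Gamelin isometry, and they agree on $B$, which is polynomial-star dense by \cite[Thm.~2]{DG}), and the general case follows from dilations $\psi_r$ once you justify the interchange of limits, which you do by isolating a quantitative principle -- uniform convergence on the balls $\rho\bar B$, $\rho<1$, forces pointwise convergence of the Aron--Berner extensions on $B^{**}$ (Cauchy estimates plus the isometry on homogeneous polynomials) -- with the Schwarz--Pick functional-calculus bound $T_{f_0^{**}}(\rho\bar B^{**})\subset\rho'\bar B^{**}$, $\rho'<1$, supplying the needed locally uniform convergence $h_r\to h$. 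What the paper's computation buys is the explicit Taylor expansion of $\widetilde{P_m}\circ T_{f_0^{**}}$, which makes the final identification immediate; what yours buys is that the delicate rearrangement step the paper dispatches with ``it is thus clear'' is replaced by a transparent limiting principle of independent interest, and that your argument uses the $C^*$-structure of $C(K)^{**}$ only through Lemma \ref{lm3.2} and the Schwarz--Pick estimate, so it would transfer to any setting where analogues of Lemmas \ref{lm2.1} and \ref{lm3.2} hold. Two small points you should make explicit when writing this up: the density argument needs the polynomial-star topology to be Hausdorff (true, since it refines the weak-star topology), and the identity $\widetilde{\psi_r}(x^{**})=\tilde\psi(rx^{**})$ deserves the one-line justification from the homogeneous expansion that you only sketch.
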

\begin{proof}
Using the Taylor series expansion of $\psi$ at $0$, and the way $\tilde{\psi}$ is defined, it is enough to assume that $\psi$ is an $m$-homogeneous polynomial $P_m$, with associated symmetric $m$-linear functional $A$. 

\medskip

Now consider the Taylor series expansion of $T_{f_0^{**}}$,
$$T_{f_0^{**}}(f^{**})=\sum_{n=0}^{\infty} g_n^{**} \cdot (f^{**})^n,$$
that we can obtain from manipulating equation \ref{eq3.1}. Moreover, we can use Cauchy's inequalities or functional calculus to get that $||g_n^{**}||\leq 1$ for all $n \in \mathbb{N}_0$.

\medskip

Then, because of the continuity of $\widetilde{P_m}$,
\begin{align*}
\widetilde{P_m}(T_{f_0^{**}}(f^{**}))
&=\widetilde{P_m}(\sum_{n=0}^{\infty} g_n^{**} \cdot (f^{**})^n)\\
&=\lim_{N\to\infty}\widetilde{P_m}(\sum_{n=0}^N g_n^{**} \cdot (f^{**})^n)\\
&=\lim_{N\to\infty} \sum_{0 \leq n_i \leq N} \tilde{A}(g_{n_1}^{**} \cdot (f^{**})^{n_1}, \cdots, g_{n_m}^{**} \cdot (f^{**})^{n_m})\\
&=\sum_{n_i \in \mathbb{N}_0}  \tilde{A}(g_{n_1}^{**} \cdot (f^{**})^{n_1}, \cdots, g_{n_m}^{**} \cdot (f^{**})^{n_m}),
\end{align*}
because we can bound the tail from $N+1$ to $\infty$ by $||A|| (\frac{||f^{**}||^{N+1}}{1-||f^{**}||})^m$, which clearly goes to $0$ as $N \to \infty$, pointwise.

\medskip

After rearranging the terms we obtain that 
$$\widetilde{P_m}(T_{f_0^{**}}(f^{**}))=\sum_{k=0}^{\infty} \sum_{n_1+\cdots+n_m=k} \tilde{A}(g_{n_1}^{**} \cdot (f^{**})^{n_1}, \cdots, g_{n_m}^{**} \cdot (f^{**})^{n_m}),$$

which is the Taylor series expansion of $\widetilde{P_m}\circ T_{f_0^{**}}$.

\medskip

Consequently the Taylor series expansion of $\widetilde{P_m}\circ T_{f_0^{**}}|_{B_{C(K)}}$ is
\begin{equation}\label{eq3.2}
\widetilde{P_m}\circ T_{f_0^{**}}|_{B_{C(K)}}(f)=\sum_{k=0}^{\infty} \sum_{n_1+\cdots+n_m=k} \tilde{A}(g_{n_1}^{**} \cdot (f)^{n_1}, \cdots, g_{n_m}^{**} \cdot (f)^{n_m}).
\end{equation}

Let us recall that $\tilde{A}$ is obtained through extending $A$ one variable at a time by weak-star continuity \cite[\S 2.1]{G-afbs}. The restriction of $\tilde{A}$ to the diagonal does not depend on the order the variables were picked since $A$ is symmetric. In turn, if each of the $k$-homogeneous polynomials in $f \in C(K)$ in the right-hand side of (\ref{eq3.2}) is weak-star continuously extended from the first to the last variable and then restricted to the diagonal, we clearly recover the same polynomials but evaluated in the bidual of $C(K)$. It is thus clear that $\widetilde{P_m}\circ T_{f_0^{**}}$ coincides with $\widetilde{\mathit{(\widetilde{P_m}\circ T_{f_0^{**}}|_{B_{C(K)}})}}$.
\end{proof}

\begin{proposition}\label{pr3.3}
The biholomorphism $T_{f_0^{**}}$ of $B_{C(K)^{**}}$ induces a mapping on $M_{H^{\infty}(B_{C(K)})}$ that maps the polynomial fiber $M_{x_0^{**}}^{\mathcal{P}}$ onto the polynomial fiber $M_{T_{f_0^{**}}(x_0^{**})}^{\mathcal{P}}$, for all $x_0^{**} \in B_{C(K)^{**}}$.
\end{proposition}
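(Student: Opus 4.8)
The plan is to realize the induced self-map of the spectrum as the transpose of a composition operator and then reduce everything to Lemmas \ref{lm3.2} and \ref{lm3.4}. First I would define $\Phi\colon H^{\infty}(B_{C(K)})\to H^{\infty}(B_{C(K)})$ by $\Phi(\psi)=\tilde{\psi}\circ T_{f_0^{**}}|_{B_{C(K)}}$, where $\tilde\psi$ denotes the canonical (Davie--Gamelin) extension of $\psi$ to $B_{C(K)^{**}}$. Lemma \ref{lm3.2} guarantees that $T_{f_0^{**}}$ is holomorphic and maps $B_{C(K)^{**}}$ into itself, so $\Phi(\psi)$ is a bounded holomorphic function on $B_{C(K)}$ and $\Phi$ is well defined. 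Linearity of $\Phi$ is immediate, and $\Phi$ is multiplicative because the canonical extension satisfies $\widetilde{\psi_1\psi_2}=\tilde\psi_1\tilde\psi_2$. For invertibility I would use that, in the commutative $C^{*}$-algebra $C(K)^{**}$, the inverse of $T_{f_0^{**}}$ is the M\"obius map $T_{-f_0^{**}}$ of the same type; setting $\Psi(\psi)=\tilde\psi\circ T_{-f_0^{**}}|_{B_{C(K)}}$ and invoking Lemma \ref{lm3.4} in the form $\widetilde{\Phi(\psi)}=\tilde\psi\circ T_{f_0^{**}}$ gives $\Psi(\Phi(\psi))=(\tilde\psi\circ T_{f_0^{**}})\circ T_{-f_0^{**}}|_{B_{C(K)}}=\psi$, and symmetrically $\Phi\circ\Psi=\mathrm{id}$. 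Thus $\Phi$ is an algebra automorphism of $H^{\infty}(B_{C(K)})$, and the induced map on the spectrum is its transpose $\Phi^{*}\colon\tau\mapsto\tau\circ\Phi$, a weak-star homeomorphism of $M_{H^{\infty}(B_{C(K)})}$.

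Next I would verify that $\Phi^{*}$ carries $M_{x_0^{**}}^{\mathcal{P}}$ into $M_{T_{f_0^{**}}(x_0^{**})}^{\mathcal{P}}$. Recall that $\tau\in M_{x_0^{**}}^{\mathcal{P}}$ exactly when $\tau(g)=\tilde g(x_0^{**})$ for every $g\in A_u(B_{C(K)})$. The essential point is that $\Phi$ preserves $A_u$: for $g\in A_u(B_{C(K)})$ Lemma \ref{lm2.1} gives $\tilde g\in A_u(B_{C(K)^{**}})$, and since $T_{f_0^{**}}$ is holomorphic and uniformly continuous the composition $\tilde g\circ T_{f_0^{**}}$ lies in $A_u(B_{C(K)^{**}})$, so $\Phi(g)\in A_u(B_{C(K)})$. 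Hence for $\tau\in M_{x_0^{**}}^{\mathcal{P}}$ and $g\in A_u$ I may apply the fiber description to $\Phi(g)$ and then Lemma \ref{lm3.4}:
$$(\Phi^{*}\tau)(g)=\tau\big(\tilde g\circ T_{f_0^{**}}|_{B_{C(K)}}\big)=\widetilde{\big(\tilde g\circ T_{f_0^{**}}|_{B_{C(K)}}\big)}(x_0^{**})=\big(\tilde g\circ T_{f_0^{**}}\big)(x_0^{**})=\tilde g\big(T_{f_0^{**}}(x_0^{**})\big).$$
Since this holds for all $g\in A_u$, we obtain $\Phi^{*}\tau\in M_{T_{f_0^{**}}(x_0^{**})}^{\mathcal{P}}$, that is, $\Phi^{*}\big(M_{x_0^{**}}^{\mathcal{P}}\big)\subseteq M_{T_{f_0^{**}}(x_0^{**})}^{\mathcal{P}}$.

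Finally I would obtain surjectivity by applying the same computation to the inverse map. Running the argument with $T_{-f_0^{**}}=T_{f_0^{**}}^{-1}$ at the point $T_{f_0^{**}}(x_0^{**})$ yields $(\Phi^{*})^{-1}\big(M_{T_{f_0^{**}}(x_0^{**})}^{\mathcal{P}}\big)\subseteq M_{x_0^{**}}^{\mathcal{P}}$, which is the reverse inclusion $M_{T_{f_0^{**}}(x_0^{**})}^{\mathcal{P}}\subseteq\Phi^{*}\big(M_{x_0^{**}}^{\mathcal{P}}\big)$; together with the previous paragraph this gives the claimed ``onto''. I expect the main obstacle to be the bookkeeping needed to confirm that $\Phi$ is genuinely an algebra automorphism of $H^{\infty}(B_{C(K)})$ --- in particular that the canonical extension is multiplicative and that Lemma \ref{lm3.4} indeed identifies $\widetilde{\Phi(\psi)}$ with $\tilde\psi\circ T_{f_0^{**}}$ --- since once this is in place the fiber statement reduces to the short computation above. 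A secondary point to handle carefully is that the fiber-defining identity $\tau(g)=\tilde g(x_0^{**})$ is available only for $g\in A_u$, so it is crucial that $\Phi$ maps $A_u(B_{C(K)})$ into itself.
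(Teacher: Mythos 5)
Your proposal is correct and follows essentially the same route as the paper's proof: you define the same composition operator $\psi\mapsto\tilde\psi\circ T_{f_0^{**}}|_{B_{C(K)}}$ (the paper calls its transpose $\widehat{T_{f_0^{**}}}$), use Lemmas \ref{lm2.1}, \ref{lm3.2} and \ref{lm3.4} for exactly the same fiber computation, and obtain surjectivity via the inverse M\"obius map $T_{-f_0^{**}}$ just as the paper does. The only difference is presentational: you package the map as an algebra automorphism and explicitly invoke multiplicativity of the canonical extension, a point the paper leaves implicit in asserting that $\widehat{T_{f_0^{**}}}$ is well defined.
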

\begin{proof}
Since $T_{f_0^{**}}$ is holomorphic and uniformly continuous, by lemmas \ref{lm2.1} and \ref{lm3.2} we get that for all $\psi \in A_u(B_{C(K)})$ clearly $\tilde{\psi} \circ T_{f_0^{**}}|_{B_{C(K)}} \in A_u(B_{C(K)})$, where $\tilde{\psi}$ is the Aron-Berner extension of $\psi$.

\medskip


\medskip
Similarly, $\tilde{\psi} \circ T_{f_0^{**}}|_{B_{C(K)}} \in H^{\infty}(B_{C(K)})$ when $\psi \in H^{\infty}(B_{C(K)})$, since $\tilde{\psi} \in H^{\infty}(B_{C(K)^{**}})$ in that case \cite[Theorem 5]{DG}.

\medskip

Consequently the mapping $\widehat{T_{f_0^{**}}}:M_{H^{\infty}(B_{C(K)})} \to M_{H^{\infty}(B_{C(K)})}$ given by
$$\widehat{T_{f_0^{**}}}(\tau)(\psi)=\tau(\tilde{\psi} \circ T_{f_0^{**}}|_{B_{C(K)}})$$
is well-defined.

\medskip

To prove the desired properties of $\widehat{T_{f_0^{**}}}$, observe that given $\tau \in M_{x_0^{**}}^{\mathcal{P}}$ and $\psi \in A_u(B_{C(K)})$,
\begin{align*}
\widehat{T_{f_0^{**}}}(\tau)(\psi)=\tau(\tilde{\psi}\circ T_{f_0^{**}}|_{B_{C(K)}})=\tilde{\psi}(T_{f_0^{**}}(x_0^{**})),
\end{align*}
where the last equality follows from Lemma \ref{lm3.4} and the fact that $\tilde{\psi} \circ T_{f_0^{**}}|_{B_{C(K)}} \in A_u(B_{C(K)})$. Thus $\widehat{T_{f_0^{**}}}(\tau) \in M_{T_{f_0^{**}}(x_0^{**})}^{\mathcal{P}}$ for all $\tau \in M_{x_0^{**}}^{\mathcal{P}}$.

\medskip

Further, given $\tau \in M_{T_{f_0^{**}}(x_0^{**})}^{\mathcal{P}}$, we have that $\tilde{\tau} \in M_{H(B)}$ defined by 
$$\tilde{\tau}(\psi)=\tau(\tilde{\psi}\circ T_{-f_0^{**}}|_{B_{C(K)}})$$
is clearly in $M_{x_0^{**}}^{\mathcal{P}}$, and
\begin{align*}
\widehat{T_{f_0^{**}}}(\tilde{\tau})(\psi)=\tilde{\tau}(\tilde{\psi}\circ T_{f_0^{**}}|_{B_{C(K)}})=\tau(\psi)
\end{align*}
i.e. $\widehat{T_{f_0^{**}}}(\tilde{\tau})=\tau$.

\medskip

Thus $\widehat{T_{f_0^{**}}}$ maps the polynomial fiber $M_{x_0^{**}}^{\mathcal{P}}$ exactly to the polynomial fiber $M_{T_{f_0^{**}}(x_0^{**})}^{\mathcal{P}}$.
\end{proof}

It is worth mentioning that for the algebra $A_{\infty}(B_{C(K)})$, we can similarly map each polynomial fiber $M_{x_0}^{\mathcal{P}}$ onto the polynomial fiber $M_{T_{f_0}(x_0)}^{\mathcal{P}}$, for all $x_0, f_0 \in B_{C(K)}$ (since $T_{f_0}$ extends to a continuous map $\overline{T_{f_0}}:\overline{B_{C(K)}}\to\overline{B_{C(K)}}$, and thus for all $\psi \in A_{\infty}(B_{C(K)})$ clearly $\psi\circ T_{f_0} \in A_{\infty}(B_{C(K)})$, so we can proceed as in Proposition \ref{pr3.3}).

\begin{corollary}\label{co3.5}
For $X=C(K)$, the polynomial cluster value theorem for $H^{\infty}(B)$ at $0$ implies the polynomial cluster value theorem for $H^{\infty}(B)$ at every $f_0^{**} \in B_{C(K)^{**}}$.
\end{corollary}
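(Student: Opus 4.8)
The plan is to avoid transferring polynomial cluster sets directly—which is awkward here because $T_{-f_0^{**}}$ carries $B_{C(K)}$ out of $C(K)$ into the bidual as soon as $f_0^{**}\notin B_{C(K)}$—and instead to argue through the algebraic criterion of Theorem \ref{a-pcvp} combined with the fiber transfer of Proposition \ref{pr3.3}. So I would assume the polynomial cluster value theorem holds at $0$, and take $f_1,\dots,f_{n-1}\in A_u(B)$ and $f_n\in H^{\infty}(B)$ with $\sum_{j=1}^n|f_j(x)|\ge\delta$ for all $x\in B:=B_{C(K)}$; by Theorem \ref{a-pcvp} it then suffices to show that $\hat f_1,\dots,\hat f_n$ have no common zero in $M_{f_0^{**}}^{\mathcal{P}}$. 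Suppose, for contradiction, that $\phi\in M_{f_0^{**}}^{\mathcal{P}}$ is such a common zero. Since $T_{f_0^{**}}(f_0^{**})=0$, Proposition \ref{pr3.3} furnishes $\sigma:=\widehat{T_{f_0^{**}}}(\phi)\in M_{0}^{\mathcal{P}}$.

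Next I would push the data forward along $T_{-f_0^{**}}$ by setting $g_j:=\tilde f_j\circ T_{-f_0^{**}}|_{B}$ for $1\le j\le n$. By Lemmas \ref{lm2.1} and \ref{lm3.2} together with \cite[Theorem 5]{DG}, one has $g_j\in A_u(B)$ for $j\le n-1$ and $g_n\in H^{\infty}(B)$, exactly as in the proof of Proposition \ref{pr3.3}. Lemma \ref{lm3.4} gives $\widetilde{g_j}=\tilde f_j\circ T_{-f_0^{**}}$, hence $\widetilde{g_j}\circ T_{f_0^{**}}|_{B}=\tilde f_j|_{B}=f_j$, so that
$$\hat g_j(\sigma)=\widehat{T_{f_0^{**}}}(\phi)(g_j)=\phi\big(\widetilde{g_j}\circ T_{f_0^{**}}|_{B}\big)=\phi(f_j)=\hat f_j(\phi)=0.$$
Thus $\sigma\in M_{0}^{\mathcal{P}}$ is a common zero of $\hat g_1,\dots,\hat g_n$.

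To reach a contradiction with the hypothesis at $0$ through Theorem \ref{a-pcvp}, it remains to verify that $\sum_{j=1}^n|g_j(x)|\ge\delta$ for all $x\in B$. Since $g_j(x)=\tilde f_j(T_{-f_0^{**}}(x))$ with $T_{-f_0^{**}}(x)\in B_{C(K)^{**}}$, this reduces to the preservation of lower bounds under extension, namely that $\sum_{j=1}^n|f_j|\ge\delta$ on $B$ forces $\sum_{j=1}^n|\tilde f_j|\ge\delta$ on $B^{**}$, and I expect this to be the main obstacle. I would establish it by radial approximation: writing $f_j=\sum_m P_m^{(j)}$ for the homogeneous expansion and $f_{j,r}(x):=f_j(rx)=\sum_m r^m P_m^{(j)}(x)$ for $0<r<1$, the Cauchy estimates $\|P_m^{(j)}\|_{B}\le\|f_j\|_{B}$ make this series converge uniformly on $B$, so $f_{j,r}\in A_u(B)$. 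Because $rx\in B$ when $x\in B$, we have $\sum_j|f_{j,r}|\ge\delta$ on $B$; and since each $\widetilde{f_{j,r}}$ is polynomial-star continuous (as in the proof of Lemma \ref{lm3.2}) while $B$ is polynomial-star dense in $\bar B^{**}$, the bound $\sum_j|\widetilde{f_{j,r}}|\ge\delta$ persists on all of $B^{**}$ for every $r<1$. Finally, as $\tilde f_j(x^{**})=\sum_m\widetilde{P_m^{(j)}}(x^{**})$ converges (the terms are dominated by $\|f_j\|\,\|x^{**}\|^m$), Abel's theorem yields $\widetilde{f_{j,r}}(x^{**})=\sum_m r^m\widetilde{P_m^{(j)}}(x^{**})\to\tilde f_j(x^{**})$ as $r\to1^-$, and letting $r\to1^-$ delivers $\sum_j|\tilde f_j(x^{**})|\ge\delta$ on $B^{**}$.

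With this lower bound in hand, $\sum_j|g_j|\ge\delta$ on $B$, so $\sigma$ would be a common zero of the sort forbidden by Theorem \ref{a-pcvp} at $0$, a contradiction. Hence no common zero exists in $M_{f_0^{**}}^{\mathcal{P}}$, and Theorem \ref{a-pcvp} then gives the polynomial cluster value theorem at $f_0^{**}$. The one genuinely delicate point is the extension lower bound of the third paragraph, and it is the radial/Abel argument that makes it go through; I note that this step is valid for an arbitrary $X$, the restriction to $C(K)$ entering only through the M\"obius machinery underlying Proposition \ref{pr3.3}.
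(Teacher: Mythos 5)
Your proof is correct, and it takes a genuinely different route from the paper's. The paper stays entirely inside the cluster-set formulation: it uses Proposition \ref{pr3.3} to identify $\widehat{\psi}(M_{f_0^{**}}^{\mathcal{P}})$ with the Gelfand image over $M_0^{\mathcal{P}}$ of the transported function $\tilde{\psi}\circ T_{-f_0^{**}}|_{B}\in H^{\infty}(B)$, applies the hypothesis at $0$ to that function, and then shows $Cl_B^{\mathcal{P}}(\tilde{\psi}\circ T_{-f_0^{**}}|_{B},0)=Cl_B^{\mathcal{P}}(\psi,f_0^{**})$ by combining the $w(X^{**},P(X))$-continuity of $T_{-f_0^{**}}$ (Lemma \ref{lm3.2}) with polynomial-star density of $B$ in $\bar{B}^{**}$; the difficulty you flagged (that $T_{-f_0^{**}}$ sends $B$ into the bidual ball) is absorbed there by passing to nets in $B^{**}$ and pulling them back into $B$. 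You instead run everything through the algebraic criterion of Theorem \ref{a-pcvp} at both points, transporting the corona-type family by composition with $T_{-f_0^{**}}$ and the hypothetical common zero by $\widehat{T_{f_0^{**}}}$; your identities $\widetilde{g_j}=\tilde{f_j}\circ T_{-f_0^{**}}$ and $\widetilde{g_j}\circ T_{f_0^{**}}|_{B}=f_j$ are exactly Lemma \ref{lm3.4} together with $T_{-f_0^{**}}\circ T_{f_0^{**}}=\mathrm{id}$, the same facts the paper uses in the surjectivity half of Proposition \ref{pr3.3}. The genuinely new ingredient on your side is the lower-bound transfer $\inf_B\sum_j|f_j|\geq\delta\Rightarrow\inf_{B^{**}}\sum_j|\tilde{f_j}|\geq\delta$, and your radial/Abel proof of it is sound: the Cauchy estimates put $f_{j,r}$ in $A_u(B)$, so $\widetilde{f_{j,r}}$ is a uniform limit of Aron-Berner extensions of polynomials on $X$ and hence polynomial-star continuous, density \cite[Thm.~2]{DG} pushes the bound to $B^{**}$ for each $r<1$, and dominated convergence of the homogeneous series handles $r\to1^-$. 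As for what each approach buys: the paper's argument is shorter and yields the sharper statement that the two cluster sets literally coincide under the M\"obius transfer; yours isolates a lemma of independent interest valid in an arbitrary Banach space (confining the $C(K)$-specific machinery to the fiber transfer), demonstrates that Theorem \ref{a-pcvp} is a workable tool rather than a mere reformulation, and rests on slightly lighter foundations, since the paper's equality $\overline{\tilde{\psi}(U\cap B^{**})}=\overline{\psi(U\cap B)}$ for $\psi\in H^{\infty}(B)$ implicitly uses the full strength of the Davie-Gamelin construction (simultaneous approximation of all $H^{\infty}$-values along nets in $B$), whereas your argument never needs more than polynomial-star density and the continuity of extensions of $A_u$-functions.
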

\begin{proof}
Let $f_0^{**} \in B_{C(K)^{**}}$. Then for all $\psi \in H^{\infty}(B_{C(K)})$,
\begin{align*}
\widehat{\psi}(M_{f_0^{**}}^{\mathcal{P}})=\widehat{\psi}\circ \widehat{T_{-f_0^{**}}}(M_0^{\mathcal{P}})=(\tilde{\psi}\circ T_{-f_0^{**}}|_{B_{C(K)}})\widehat{\phantom{x}}(M_0^{\mathcal{P}})
\end{align*}
where the last equality holds since for all $\tau \in M_0^{\mathcal{P}}$,
\begin{align*}
\widehat{\psi}(\widehat{T_{-f_0^{**}}}(\tau))=\widehat{T_{-f_0^{**}}}(\tau)(\psi)=\tau(\tilde{\psi}\circ T_{-f_0^{**}}|_{B_{C(K)}}).
\end{align*}

Meanwhile,
\begin{align*}
Cl_B^{\mathcal{P}}(\tilde{\psi}\circ T_{-f_0^{**}}|_{B_{C(K)}},0)&=\{ \lambda: \exists (f_{\alpha})\subset B \; | \; f_{\alpha}\xrightarrow{w(X^{**}, \mathcal{P}(X))}0, \tilde{\psi}\circ T_{-f_0^{**}}(f_{\alpha})\to\lambda\}\\
&=\{ \lambda: \exists (g_{\alpha}^{**})\subset B^{**} \; | \; g_{\alpha}^{**}\xrightarrow{w(X^{**}, \mathcal{P}(X))}f_0^{**}, \tilde{\psi}(g_{\alpha}^{**})\to\lambda\}\\
\end{align*}
because $T_{-f_0^{**}}$ is $(w(X^{**}, \mathcal{P}(X)),w(X^{**}, \mathcal{P}(X)))$-continuous due to Lemma \ref{lm3.2} and $T_{-f_0^{**}}(0)=f_0^{**}$.

\medskip

Since $B$ is polynomial-star dense in $B^{**}$, we have that, if $\mathfrak{U}$ is a base of polynomial-star neighborhoods of $f_0^{**}$,

\begin{align*}
&\{ \lambda: \exists (g_{\alpha}^{**})\subset B^{**} \; | \; g_{\alpha}^{**}\xrightarrow{w(X^{**}, \mathcal{P}(X))}f_0^{**}, \tilde{\psi}(g_{\alpha}^{**})\to\lambda\}\\&=\underset{U \in \mathfrak{U}}{\cap} \overline{\tilde{\psi}(U\cap B^{**})}=\underset{U \in \mathfrak{U}}{\cap} \overline{\psi(U\cap B)}=Cl_B^{\mathcal{P}}(\psi,f_0^{**}),
\end{align*}
hence $\widehat{\psi}(M_{f_0^{**}}^{\mathcal{P}})\subset Cl_B^{\mathcal{P}}(\psi,f_0^{**})$ for all $\psi \in H^{\infty}(B_{C(K)})$, as needed.
\end{proof}

The reader can similarly check that for $X=C(K)$ and the algebra $A_{\infty}(B)$ of bounded holomorphic functions that can be continuously extended to the boundary, the polynomial cluster value problem reduces to the origin, points in the sphere of $C(K)$ and elements of $\overline{B}^{**}\setminus \overline{B}$.

\section{Strong peak points and the polynomial cluster value problem}

The discussions of this section concern solutions to the polynomial cluster value problem at special types of points. The results are mainly about the algebra $A_{\infty}(B)$, but we will also work with the other uniform algebras mentioned in the introduction. In turn we will obtain corresponding original cluster value theorems too. This is based on the observation that some points of $B$ admit a global function that distinguishes the point in a manner that we are about to describe broadly. 


\medskip

A peak point for a function space $H$ on a metric space $\Omega$ is an element $x$ of $\Omega$ for which there exists $f \in H$ such that $f(x)=1$ and $|f(y)|<1$ for all $y \in \Omega\setminus\{x\}$.

\medskip

Meanwhile, a strong peak point for a function space as before is an element $x$ of $\Omega$ for which there exists $f \in H$ satisfying $f(x)=1$ and that for all $\varepsilon>0$ we can find $\delta>0$ such that $d(x,y)>\varepsilon$ implies $|f(y)|<1-\delta$. In this case we say that $f$ peaks strongly at $x$.

\medskip

Examples of peak point sets for the uniform algebra $A(B)$ are the following, as exhibited in \cite{AL} and \cite{ACLP}.

\begin{theorem}[Acosta, Louren\c{c}o] If $K$ is separable, then all the extreme points in the ball of $X=C(K)$ are peak points for $A(B)$ as a function space on $\overline{B}$.
\end{theorem}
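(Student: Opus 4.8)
The plan is to produce, for each extreme point, an explicit peak function living in $A(B)$, exploiting the concrete description of the extreme points of $B_{C(K)}$ together with separability. First I would recall that, since $X=C(K)$ is a complex $C(K)$-space, the extreme points of $\overline{B}$ are exactly the unimodular functions: $f_0\in\overline{B}$ is extreme if and only if $|f_0(t)|=1$ for every $t\in K$. The forward implication follows by applying strict convexity of the closed unit disk pointwise, and the converse from a localized Urysohn perturbation supported where $|f_0|<1$. So I fix such an $f_0$.

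Next, using separability, I would choose a countable dense set $\{t_n\}_n\subset K$ and weights $a_n>0$ with $\sum_n a_n=1$, and form the point-mass measure $\mu=\sum_n a_n\,\overline{f_0(t_n)}\,\delta_{t_n}\in C(K)^*=X^*$. Since $|\overline{f_0(t_n)}|=1$, its total variation is $\|\mu\|=\sum_n a_n=1$, so the associated functional $\ell(g)=\int_K g\,d\mu=\sum_n a_n\,\overline{f_0(t_n)}\,g(t_n)$ satisfies $\|\ell\|\le 1$, whence $|\ell(g)|\le 1$ on $\overline{B}$; moreover $\ell(f_0)=\sum_n a_n|f_0(t_n)|^2=1$. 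Because $\ell\in X^*$ and $A(B)$ contains $X^*|_B$ together with the constants, the affine function $h:=\tfrac12(1+\ell)$ lies in $A(B)$, extends continuously to $\overline{B}$, and satisfies $h(f_0)=1$. I would then claim that $h$ peaks at $f_0$.

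The heart of the argument is to show $|h(g)|<1$ for every $g\in\overline{B}\setminus\{f_0\}$, which reduces to two equality analyses. Writing $z_n=a_n\,\overline{f_0(t_n)}\,g(t_n)$, one has $\sum_n|z_n|\le\sum_n a_n=1$, so the requirement $\ell(g)=\sum_n z_n=1$ forces, by the equality case of the triangle inequality, $|z_n|=a_n$ and $z_n\ge 0$ for every $n$; hence $\overline{f_0(t_n)}\,g(t_n)=1$, i.e.\ $g(t_n)=f_0(t_n)$ for all $n$, and density of $\{t_n\}$ with continuity yields $g=f_0$. Conversely, $|h(g)|=\tfrac12|1+\ell(g)|\le\tfrac12(1+|\ell(g)|)\le 1$ is an equality only when $\ell(g)$ is a nonnegative real of modulus $1$, that is $\ell(g)=1$; combining the two steps, this occurs precisely at $g=f_0$.

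The main obstacle I anticipate is not any individual estimate but the rotational symmetry of the construction: the linear functional $\ell$ by itself has modulus $1$ on the whole circle $\{c f_0:|c|=1\}$, so it is not a peak function. The role of averaging with the constant $1$, replacing $\ell$ by $\tfrac12(1+\ell)$, is exactly to break this symmetry and single out the direction $\ell(g)=1$; getting this bookkeeping right, and locating precisely where separability enters, is the delicate part. Indeed, separability is essential: the argument needs a Borel probability measure with dense support so that agreement on its atoms propagates to all of $K$, and a countable dense subset furnishes such a measure through the atomic $\mu$ above.
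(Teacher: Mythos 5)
The paper does not actually prove this statement---it is quoted, with attribution, from Acosta and Louren\c{c}o \cite{AL}---so the only comparison available is with that cited source, and your argument is in fact the standard one: an atomic norm-one functional $\ell$ built on a countable dense subset of $K$ with phases $\overline{f_0(t_n)}$, averaged with the constant $1$ to break rotational symmetry, with the equality cases of the triangle inequality forcing $g=f_0$. Your proof is correct and complete; the only (cosmetic) slip is that the two justifications for the characterization of extreme points are attached to the wrong directions: extreme $\Rightarrow$ unimodular is the Urysohn-perturbation direction, while unimodular $\Rightarrow$ extreme is the pointwise strict-convexity direction.
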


\begin{theorem}[Aron, Choi, Louren\c{c}o, Paques] All the extreme points in the ball of $X={\ell_{\infty}}$ are peak points for $A(B)$ as a function space on $\overline{B}$.
\end{theorem}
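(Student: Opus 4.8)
The plan is to reduce the statement to an explicit construction using the coordinate functionals of $\ell_\infty$. First I would record the standard description of the extreme points of $B_{\ell_\infty}$: a point $x=(x_n)$ with $\|x\|_\infty\le 1$ is extreme in $B_{\ell_\infty}$ if and only if $|x_n|=1$ for every $n$. The forward direction holds because a coordinate with $|x_{n_0}|<1$ can be perturbed by $\pm\varepsilon e_{n_0}$ while staying in $B_{\ell_\infty}$, so $x$ would fail to be extreme; the converse follows coordinatewise from the fact that the unit circle consists precisely of the extreme points of the closed unit disk in $\mathbb{C}$. Hence it suffices to exhibit a peaking function in $A(B)$ at an arbitrary $x$ with $|x_n|=1$ for all $n$.

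Next I would write down a single norm-one linear functional that already isolates $x$. Let $e_n^*\in\ell_\infty^*$ denote the $n$-th coordinate functional, $e_n^*(y)=y_n$, and set
$$g=\sum_{n=1}^\infty 2^{-n}\,\overline{x_n}\,e_n^*.$$
Since $\sum_n 2^{-n}<\infty$ this series converges in $\ell_\infty^*$, so $g\in X^*$ and hence $g|_B\in A(B)$, with $\|g\|\le\sum_n 2^{-n}=1$. Because $|x_n|=1$, we get $g(x)=\sum_n 2^{-n}|x_n|^2=1$. The candidate peaking function is then $f=\tfrac12(1+g)$, which lies in $A(B)$ and satisfies $f(x)=1$; being affine in the bounded functional $g$, it is continuous on all of $X$ and in particular defines a function on the metric space $\overline{B}$.

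Finally I would verify strict peaking. Since $g$ maps $\overline{B}$ into the closed unit disk, $f$ maps $\overline{B}$ into the closed disk of center $\tfrac12$ and radius $\tfrac12$, which touches the unit circle only at the point $1$; hence $|f(y)|=1$ forces $f(y)=1$, i.e.\ $g(y)=1$. The crux is to show that $g(y)=1$ with $y\in\overline{B}$ forces $y=x$. Writing $\overline{x_n}y_n=r_ne^{i\theta_n}$ with $r_n=|y_n|\le 1$ and taking real parts in $1=\sum_n 2^{-n}\overline{x_n}y_n$ gives $\sum_n 2^{-n} r_n\cos\theta_n=1$; since each summand is at most $2^{-n}$ and the weights sum to $1$, equality forces $r_n=1$ and $\theta_n=0$ for every $n$, that is $\overline{x_n}y_n=1$, whence $y_n=x_n$ for all $n$, i.e.\ $y=x$. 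Thus $|f(y)|<1$ for every $y\in\overline{B}\setminus\{x\}$, so $x$ is a peak point for $A(B)$. I do not expect a serious obstacle here: the only delicate point is this equality analysis of the weighted triangle inequality, and the strict-convexity reduction via $z\mapsto\tfrac12(1+z)$ is what upgrades the norm attainment of $g$ at $x$ into a genuine strict peak.
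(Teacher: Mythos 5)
Your proof is correct. Note that the paper does not actually prove this statement: it quotes it as a known theorem, citing Aron, Choi, Louren\c{c}o and Paques \cite{ACLP}, so there is no internal proof to compare against. Your argument is the standard self-contained one for this result, and every step checks out: the characterization of the extreme points of $B_{\ell_\infty}$ as the unimodular sequences, the absolutely convergent functional $g=\sum_n 2^{-n}\overline{x_n}e_n^*$ with $\|g\|\le 1$ and $g(x)=1$, and the two-stage rigidity analysis. The key points are exactly the two you flag: first, $\bigl|\tfrac12(1+w)\bigr|=1$ with $|w|\le 1$ forces $w=1$, which converts norm attainment of $g$ into a candidate peak; second, the equality case of the weighted estimate $\operatorname{Re}\sum_n 2^{-n}\overline{x_n}y_n \le \sum_n 2^{-n}|y_n| \le 1$ forces $\overline{x_n}y_n=1$ for every $n$ (a complex number of modulus at most $1$ with real part $1$ equals $1$), hence $y=x$ since $|x_n|=1$. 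Since $g\in X^*$, the function $f=\tfrac12(1+g)$ lies in $A(B)$ and extends continuously to $\overline{B}$, so it is a legitimate peaking function in the sense of the paper's definition of a function space on $\overline{B}$. One stylistic remark: this construction in fact shows $x$ is a peak point whenever $|x_n|=1$ for all $n$, and the extreme-point characterization is only needed to phrase the conclusion as stated; also, essentially the same weighted-series device (with a dense sequence in $K$) underlies the companion Acosta--Louren\c{c}o result for separable $C(K)$ quoted just before this theorem, so your approach is very much in the spirit of the cited literature.
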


Examples and counterexamples of strong peak points are provided below, based on constructions in \cite{AL} and \cite{F}.

\begin{theorem}[Acosta, Louren\c{c}o]
If $K$ is any infinite compact Hausdorff space and $X=C(K)$, then there are no strong peak points for $A_{\infty}(B)$.
\end{theorem}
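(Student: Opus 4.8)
The plan is to deduce the theorem from two facts: (A) every strong peak point of $A_\infty(B)$ is a denting point of the closed unit ball $\overline{B}=\overline{B_{C(K)}}$, and (B) when $K$ is infinite, $\overline{B_{C(K)}}$ has \emph{no} denting points. Before either step I would record that a strong peak point must lie on the sphere $S_{C(K)}$: if $\|x\|<1$ and some $f\in A_\infty(B)$ had $f(x)=1=\|f\|$, the maximum modulus principle would force $f$ to be the constant $1$, which peaks nowhere. So throughout I fix $x\in S_{C(K)}$ and a candidate $f\in A_\infty(B)$, normalized so that $f(x)=1=\|f\|$, and aim for a contradiction.

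It is clean, and worth doing unconditionally (without (A)), to first dispose of the case where $x$ is \emph{not} an extreme point of $\overline{B}$, i.e. $|x(s)|=:\beta<1$ for some $s\in K$. Choosing a small neighbourhood $W$ of $s$ and a Urysohn function $u$ with $u(s)=1$ and $\operatorname{supp}u\subset W$, I would set $y_\zeta=x$ off $W$ and $y_\zeta=(1-u)x+u\zeta$ on $W$ for $\zeta\in\overline{\mathbb{D}}$; then $y_\zeta\in\overline B$, $y_\zeta(s)=\zeta$, and the centre $y_{x(s)}\to x$ in norm as $W\downarrow\{s\}$ by continuity of $x$. Hence $g(\zeta)=f(y_\zeta)$ is a holomorphic self-map of $\overline{\mathbb{D}}$ with $|g(x(s))|$ arbitrarily close to $1$; by the maximum principle some boundary point $\zeta^\ast$ has $|g(\zeta^\ast)|\ge|g(x(s))|$, while $|\zeta^\ast-x(s)|\ge 1-\beta$ forces $\|y_{\zeta^\ast}-x\|\ge 1-\beta$. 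Thus $f$ takes values of modulus arbitrarily close to $1$ at points staying the fixed distance $1-\beta$ from $x$, contradicting strong peaking. In the language of (A)–(B) this is no surprise: a non-extreme point is never denting.

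For the essential case, where $x$ is an extreme (hence unimodular) point, I would run (A)–(B). For (B), using that an infinite compact Hausdorff $K$ contains points $t_1,t_2,\dots$ with pairwise disjoint neighbourhoods $W_n$, I build via Urysohn functions elements $y_n\in\overline B$ that agree with $x$ off $W_n$ and satisfy $|y_n(t_n)-x(t_n)|=2$, so $\|y_n-x\|\ge 1$. Since the bumps are disjoint, $\bigl\|\tfrac1N\sum_{n\le N}y_n-x\bigr\|\le \tfrac2N\to 0$, whence $x\in\overline{\mathrm{conv}}\{y_n\}\subset\overline{\mathrm{conv}}\bigl(\overline B\setminus B(x,1)\bigr)$, so $x$ is not denting. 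This is where the infinitude of $K$ is indispensable: for finite $K$ the extreme points of $\overline{B_{C(K)}}$ \emph{are} denting and \emph{are} genuine strong peak points. For (A), I would set $V_\delta=\{y\in\overline B:\operatorname{Re}f(y)>1-\delta\}\subset\{|f|>1-\delta\}$, so that strong peaking gives $\operatorname{diam}V_\delta\to0$, and then convert these shrinking \emph{analytic} slices into shrinking \emph{linear} slices by linearizing $f$: writing $h=1-f$, which maps $B$ into the right half-plane with $h(x)=0$, a Julia–Carathéodory-type boundary analysis should produce $\phi\in X^\ast$ with $\phi(x)=\|\phi\|=1$ for which $\operatorname{Re}\phi(y_n)\to1$ forces $\operatorname{Re}f(y_n)\to1$, and hence $y_n\to x$; that is, $x$ is strongly exposed and therefore denting.

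Putting the pieces together, a strong peak point would be denting by (A) and non-denting by (B), so none can exist. I expect the main obstacle to be step (A): extracting an \emph{exposing linear functional} from a merely \emph{holomorphic} strong peak function. The difficulty is genuine because the sublevel sets of $\operatorname{Re}f$ are only pluriharmonic, not convex, so one cannot simply pass to convex hulls of the analytic slices; the conversion requires controlling the angular derivative and boundary linearization of $f$ at $x$, which is precisely the delicate input, and the place where I would expect to need the constructions of \cite{AL} and \cite{F}.
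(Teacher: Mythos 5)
First, a point of comparison: the paper does not prove this statement at all --- it is quoted as an external result from \cite{AL} --- so there is no internal proof to measure your argument against, and I assess it on its own terms.

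Your step (B) is correct and cleanly executed: for infinite $K$, disjoint Urysohn bumps give points $y_n\in\overline{B}$ with $\|y_n-x\|\geq 1$ whose arithmetic means converge in norm to $x$, so no point of $S_{C(K)}$ is a denting point of $\overline{B}$. Your preliminary reductions (strong peak points lie on the sphere; the non-extreme case) are also essentially sound, with one repair needed: your analytic disks $\zeta\mapsto y_\zeta$ typically lie entirely in the unit sphere, since $x$ attains modulus $1$ somewhere off $\operatorname{supp}u$, so $g(\zeta)=f(y_\zeta)$ is not obviously holomorphic; you must first consider $g_r(\zeta)=f(ry_\zeta)$ for $r<1$, which is holomorphic, and pass to the limit via Montel's theorem using that $f\in A_{\infty}(B)$ is continuous up to $\overline{B}$.

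The fatal problem is step (A): the claim that every strong peak point of $A_{\infty}(B)$ is a denting point of $\overline{B}$ is false, not merely hard to prove. The paper itself quotes (its Theorem 4.4, also from \cite{AL}) that \emph{every} point of the unit sphere of $\ell_1$ is a strong peak point for $A(B_{\ell_1})$, hence for the larger algebra $A_{\infty}(B_{\ell_1})$, since the same peaking function serves. But the sphere of $\ell_1$ contains non-extreme points of the ball, e.g. $x=(\tfrac12,\tfrac12,0,\dots)=\tfrac12\bigl[(\tfrac34,\tfrac14,0,\dots)+(\tfrac14,\tfrac34,0,\dots)\bigr]$, and denting points are necessarily extreme. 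So strong peaking does not imply denting. Since your proposed proof of (A) --- shrinking analytic slices plus a Julia--Carath\'eodory-type linearization producing an exposing functional $\phi\in X^*$ --- uses nothing specific to $C(K)$, it would establish this false general principle, and therefore cannot be completed in any form; the obstruction you flagged is not a technical difficulty but an impossibility. The true implication between these notions runs in the opposite direction only (strongly exposed, or locally uniformly convex, points are strong peak points --- that is the paper's Theorem 4.6), and it is not reversible precisely because holomorphic peaking functions can be genuinely nonlinear. Consequently the essential case of your argument, $x$ an extreme (unimodular) element of $\overline{B_{C(K)}}$, remains completely unproved; that case is exactly where the infinitude of $K$ must interact with holomorphy and with the continuity of $f$ up to the boundary through infinitely many disjoint perturbations simultaneously, as in Acosta--Louren\c{c}o's actual argument, rather than through any convex-geometric dichotomy of denting versus non-denting points.
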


\begin{theorem} [Acosta, Louren\c{c}o] All the points in the unit sphere of $X=\ell_1$ are strong peak points for $A(B)$ as a function space on $\overline{B}$.
\end{theorem}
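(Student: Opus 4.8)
The plan is to exploit the special geometry of the $\ell_1$-ball, where every norming functional determines a \emph{flat} face, together with the uniform Kadec--Klee behaviour of the $\ell_1$-norm. I would first normalize by an isometry. Given $x=(x_k)\in S_{\ell_1}$, the diagonal map $D(y)=(\overline{\operatorname{sgn}(x_k)}\,y_k)_k$ (with $\operatorname{sgn}(0):=1$) is a surjective linear isometry of $\ell_1$ carrying $\overline{B}$ onto itself and $x$ onto $(|x_k|)_k$; being linear it preserves $A(B)$ under composition and preserves distances, so $x$ is a strong peak point for $A(B)$ if and only if $(|x_k|)_k$ is. Hence I may assume $x_k\ge 0$ and $\sum_k x_k=1$, with norming functional $\phi(y)=\sum_k y_k\in X^*$, so that $\phi(x)=1$ and $\operatorname{Re}\phi(y)\le\|y\|_1\le 1$ on $\overline{B}$, equality forcing $y_k\ge 0$ and $\sum_k y_k=1$ (i.e.\ $y$ in the face).

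Next I would record the quantitative fact that makes strong peaking possible, a uniform Kadec--Klee reduction: for each $\varepsilon>0$ choose $N$ with $\sum_{k>N}x_k<\varepsilon/4$; then whenever $\|y\|_1\le 1$ and $\sum_{k\le N}|y_k-x_k|<\varepsilon/4$ one has $\sum_{k\le N}|y_k|>1-\varepsilon/2$, hence $\sum_{k>N}|y_k|<\varepsilon/2$, and therefore $\|y-x\|_1<\varepsilon$. Thus it suffices to produce a single $f\in A(B)$ with $f(x)=1$ and $\|f\|_{\overline{B}}\le 1$ for which $|f(y)|\to 1$ forces, for every fixed $N$, the head coordinates $y_1,\dots,y_N$ to approach $x_1,\dots,x_N$, the control sharpening as $|f|\to 1$.

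For the construction I would use two ingredients. The first is $g(y)=\tfrac12(1+\phi(y))$, which satisfies $g(x)=1$, $\|g\|_{\overline B}\le 1$, and, by the elementary estimate $|g(y)|^2\le\tfrac12(1+\operatorname{Re}\phi(y))$, drives $y$ uniformly towards the face as $|g(y)|\to 1$. The second ingredient must pin the individual head coordinates; here neither a linear nor a single quadratic term suffices, since the face is flat (so $\phi$, and any symmetric form with the forced coefficients, peaks on an entire subface) and since the maximum modulus principle forbids a disk self-map from taking the value $1$ at an interior point $x_k<1$. I would therefore resolve the coordinates collectively by finite-dimensional peak functions on the head block (in finite dimensions a peak point is automatically a strong peak point by compactness). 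The model is $\ell_1^2$: the function $f_\lambda(z,w)=\tfrac{1}{1+\lambda}\big(z+w+4\lambda zw\big)$ satisfies $|z+w+4\lambda zw|\le |z|+|w|+4\lambda|z||w|\le 1+\lambda$ on $\overline{B_{\ell_1^2}}$, so $\|f_\lambda\|\le 1$, with equality only at the target; the higher-degree term $4\lambda zw$ breaks the flatness of $\phi$. Generalizing this produces correction polynomials $m_N$ selecting $x^{(N)}=(x_1,\dots,x_N,0,\dots)$ among the points of the head face, and assembling them by a convex combination with summable weights, $f=\sum_N\theta_N f_N$ normalized so that $f(x)=1$ and $\|f\|_{\overline B}\le 1$, yields a global function for which $|f(y)|\to 1$ successively forces $y_k\to x_k$ on longer heads. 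Combined with the reduction above, this gives the strong peak property.

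The main obstacle is exactly this last construction: producing one holomorphic $f$ on the whole complex ball that peaks strongly at an interior-of-face point, simultaneously overcoming the flatness of the faces of $B_{\ell_1}$ and the maximum-modulus obstruction to controlling a single interior coordinate, while keeping $\|f\|_{\overline B}\le 1$, $f(x)=1$, and ensuring the coordinate control sharpens uniformly as $|f|\to 1$. The face-proximity factor $g$ and the uniform Kadec--Klee estimate are routine; verifying the norm bound and the uniqueness and uniformity of the peak for the assembled correction term is where the real work lies, and is the step I expect to occupy most of the proof.
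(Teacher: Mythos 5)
First, a point of reference: the paper itself contains no proof of this theorem; it is quoted from Acosta--Louren\c{c}o \cite{AL}, so your attempt can only be measured against the cited construction. Your outer framework is sound and close in spirit to what is needed: the reduction by a diagonal isometry to $x_k\ge 0$, the tail estimate (control of the first $N$ coordinates of $y$ together with $\|y\|_1\le 1$ and $\sum_{k>N}x_k<\varepsilon/4$ forces $\|y-x\|_1<\varepsilon$), the remark that in finite dimensions peak points are automatically strong peak points by compactness, and the convex-combination assembly $f=\sum_N\theta_N f_N$, which does work in the sense that $|f(y)|\to 1$ forces each $|f_N(y)|\to 1$, since $\theta_N(1-|f_N(y)|)\le 1-\sum_M\theta_M|f_M(y)|\le 1-|f(y)|$. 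The $\ell_1^2$ model at $(1/2,1/2)$ is also correct.

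However, the central step --- the existence of the blocks $f_N$ --- is not merely left open: as you describe it, it is impossible. You want each $f_N$ to be built on the head coordinates and to ``select'' $x^{(N)}=(x_1,\dots,x_N,0,\dots)$, with $\|f_N\|_{\overline{B}}\le 1$ and $f_N(x)=1$; since $f_N$ depends only on $y_1,\dots,y_N$, this means $|f_N|$ attains the value $1$ at $x^{(N)}$. But whenever $x$ has a nonzero tail, $\|x^{(N)}\|_1=\sum_{k\le N}x_k<1$, so $x^{(N)}$ is an \emph{interior} point of the ball of the head block $\ell_1^N$, and the maximum modulus principle --- exactly the obstruction you flag yourself --- forces any such $f_N$ to be constant. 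No choice of weights $\theta_N$ can repair this: the building blocks do not exist. The repair is to make each block see the tail: use the norm-one maps $\Pi_N:\ell_1\to\ell_1^{N+1}$, $\Pi_N y=(y_1,\dots,y_N,\sum_{k>N}y_k)$, which send $x$ to a point $a=\Pi_N x$ of the \emph{unit sphere} of $\ell_1^{N+1}$, and set $f_N=h_N\circ\Pi_N$, where $h_N$ peaks precisely at $a$; for instance
\[
h_N(w)=\frac{1}{2}\left(1+\frac{2\sum_i a_i w_i+\bigl(\sum_i w_i\bigr)^2-\sum_i w_i^2}{1+\sum_i a_i^2}\right)
\]
works for any $a$ with $a_i\ge 0$, $\sum_i a_i=1$: the triangle inequality together with the identity $1+\sum_i a_i^2-2\sum_i a_i t_i-\bigl(\sum_i t_i\bigr)^2+\sum_i t_i^2=\bigl(1-(\sum_i t_i)^2\bigr)+\sum_i(t_i-a_i)^2$ gives $|h_N|\le 1$, and $|h_N(w)|=1$ first forces $F(w)=F(a)$ exactly, then $|w_i|=a_i$ with all terms $a_iw_i$, $w_iw_j$ nonnegative, i.e.\ $w=a$. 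This also supplies the general-target finite-dimensional peak functions your sketch lacks (your model only treats the symmetric point $(1/2,1/2)$). Two further remarks: these $f_N$ are polynomials in finitely many linear functionals, hence genuinely in $A(B)$, which matters since the theorem concerns $A(B)$; and the blockwise route is essentially forced, because the natural ``single global quadratic'' completion of your idea, $2\sum_n x_n y_n+\bigl(\sum_n y_n\bigr)^2-\sum_n y_n^2$, involves $\sum_n y_n^2$, which is \emph{not} in $A(B_{\ell_1})$ (its associated operator $\ell_1\to\ell_\infty$ is the noncompact inclusion, so it is not a uniform limit of finite-type polynomials) and would only prove the statement for $A_u(B)$.
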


\begin{theorem}[Farmer] All the points in the unit sphere of a uniformly convex Banach space $X$ are strong peak points for $A(B)$ as a function space on $\bar{B}=\bar{B}^{**}$.
\end{theorem}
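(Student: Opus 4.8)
The plan is to exhibit a single function in $A(B)$ that peaks strongly at a prescribed sphere point, built from the support functional, and to use uniform convexity to control its level sets. First I would record that a uniformly convex space is reflexive (Milman--Pettis), so that $\bar{B}=\bar{B}^{**}$ and there is no ambiguity in viewing $A(B)$ as a function space on $\bar{B}$. Fix $x_0$ in the unit sphere and, by Hahn--Banach, choose a support functional $x^*\in X^*$ with $\|x^*\|=1$ and $x^*(x_0)=1$. The candidate is
\[
f=\tfrac{1}{2}\,(1+x^*),
\]
which lies in $A(B)$ since it is an affine combination of the constant $1$ and an element of $X^*$. Clearly $f(x_0)=1$, and for every $y\in\bar{B}$ we have $x^*(y)\in\overline{\mathbb{D}}$, so that $|f(y)|\le 1$, with equality forcing $x^*(y)=1$.

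The heart of the argument is to convert uniform convexity into strong exposedness of $x_0$ by $x^*$: for every $\varepsilon>0$ there is $\delta>0$ such that $y\in\bar{B}$ with $\|y-x_0\|>\varepsilon$ forces $\operatorname{Re} x^*(y)\le 1-2\delta$. To prove this I would examine the midpoint $z=\tfrac{1}{2}(x_0+y)$. Since $\|x^*\|=1$ and $x^*(x_0)=1$, one has $\operatorname{Re} x^*(z)\le\|z\|$ and $\operatorname{Re} x^*(z)=\tfrac{1}{2}(1+\operatorname{Re} x^*(y))$. If $\|y-x_0\|>\varepsilon$, then uniform convexity (in its unit-ball form, applied to the pair $x_0,y$ of norm at most $1$) supplies $\delta=\delta(\varepsilon)>0$ with $\|z\|\le 1-\delta$; combining the two facts gives $\operatorname{Re} x^*(y)=2\operatorname{Re} x^*(z)-1\le 1-2\delta$. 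This is where all the geometry lives, and I expect it to be the main obstacle: everything else is formal, whereas here one must feed the modulus of convexity of $X$ into a statement about a linear functional. A small technical point to dispatch along the way is the standard equivalence between the unit-ball and unit-sphere forms of uniform convexity, which lets one avoid normalizing $y$.

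Finally I would pass from control of $\operatorname{Re} x^*(y)$ to control of $|f(y)|$ by a one-variable disk estimate. Writing $x^*(y)=a+bi$ with $a^2+b^2\le 1$ and $a=\operatorname{Re} x^*(y)\le 1-2\delta$, one computes
\[
4\,|f(y)|^2=(1+a)^2+b^2\le (1+a)^2+(1-a^2)=2+2a\le 4-4\delta,
\]
so that $|f(y)|\le\sqrt{1-\delta}$, and hence $|f(y)|<1-\delta'$ for $\delta'=1-\sqrt{1-\delta}>0$. Therefore, for each $\varepsilon>0$ the condition $\|y-x_0\|>\varepsilon$ yields $|f(y)|<1-\delta'$, which is precisely the requirement that $f$ peaks strongly at $x_0$. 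Since $x_0$ was an arbitrary point of the unit sphere, this shows every such point is a strong peak point for $A(B)$ as a function space on $\bar{B}=\bar{B}^{**}$.
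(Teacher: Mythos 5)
Your proposal is correct and takes essentially the same route as the paper: the paper does not reprove Farmer's theorem itself (it is cited), but immediately afterwards it proves the more general locally-uniformly-convex version by exactly your argument --- a norming functional $x^*$, the midpoint estimate $\|\tfrac{1}{2}(x+y)\|\geq \tfrac{1}{2}\operatorname{Re}(x^*(x+y))$ converting convexity into control of $\operatorname{Re} x^*(y)$, and the peaking function $f=\tfrac{1}{2}(1+x^*)$. Your only additions are the explicit disk estimate (which the paper leaves as ``easy to check'') and the Milman--Pettis remark justifying $\bar{B}=\bar{B}^{**}$; just note that your final bound gives $|f(y)|\leq 1-\delta'$ rather than strict inequality, so one should shrink to, say, $\delta'/2$ to match the definition of strong peaking.
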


To obtain a local version of the previous result, let us consider the following notion: a Banach space $X$ is called locally uniformly convex at $x$ if $\lim\|y_n-x\|=0$ whenever $\{y_n\}\subset X$ is such that $\lim\|y_n\|=\|x\|$ and $\lim\|x+y_n\|=2\|x\|$. It is easy to check that uniformly convex spaces are locally uniformly convex.

\begin{theorem}
If $X$ is locally uniformly convex at a point $x$ of its sphere $S_X$ then $x$ is a strong peak point for $A(B)$ as a function space on $\bar{B}$.
\end{theorem}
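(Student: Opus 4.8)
The plan is to exhibit a concrete peaking function built directly from the norm of $X$, mimicking Farmer's construction but exploiting local uniform convexity at the single point $x$. Since $x \in S_X$, by Hahn--Banach there is a supporting functional $x^* \in S_{X^*}$ with $x^*(x) = 1$. The natural candidate to peak at $x$ is
\begin{equation*}
f(y) = \frac{1 + x^*(y)}{2},
\end{equation*}
which lies in $A(B)$ since it is affine in $x^* \in X^*$, satisfies $f(x) = 1$, and has $|f(y)| \le 1$ on $\bar{B}$ because $|x^*(y)| \le 1$ there. First I would verify that $f(x)=1$ and that $\|f\|_{\bar B}=1$, and then reduce the strong peak condition to a statement about how $\mathrm{Re}\, x^*(y)$ being close to $1$ forces $\|y - x\|$ to be small.

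The key step is the following implication: if $y \in \bar{B}$ and $|f(y)|$ is close to $1$, then $\mathrm{Re}\, x^*(y)$ is close to $1$, and I must deduce $\|y - x\| \to 0$. Concretely, suppose toward a contradiction that $x$ is not a strong peak point via $f$; then there is $\varepsilon > 0$ and a sequence $(y_n) \subset \bar{B}$ with $\|y_n - x\| > \varepsilon$ but $|f(y_n)| \to 1$. Since $|1 + x^*(y_n)| \to 2$ and $|x^*(y_n)| \le 1$, I would argue that $x^*(y_n) \to 1$; writing $x^*(y_n) = a_n + i b_n$ with $a_n^2 + b_n^2 \le \|y_n\|^2 \le 1$, the condition $|1 + x^*(y_n)| \to 2$ forces $a_n \to 1$ and $b_n \to 0$, hence $\|y_n\| \to 1 = \|x\|$ as well. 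Now I invoke local uniform convexity: I need $\|x + y_n\| \to 2\|x\| = 2$. This follows because $\|x + y_n\| \ge \mathrm{Re}\, x^*(x + y_n) = 1 + a_n \to 2$, while the triangle inequality gives $\|x + y_n\| \le \|x\| + \|y_n\| \to 2$. Then local uniform convexity at $x$ yields $\|y_n - x\| \to 0$, contradicting $\|y_n - x\| > \varepsilon$.

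The main obstacle is the contradiction argument does not directly produce the quantitative $\delta$ demanded by the definition of strong peak point, so I should phrase it carefully as a contrapositive rather than asserting a uniform modulus. The definition requires that for each $\varepsilon > 0$ there exists $\delta > 0$ with $\|y - x\| > \varepsilon \Rightarrow |f(y)| < 1 - \delta$; the sequential argument above establishes precisely the negation of its failure, since if no such $\delta$ existed one could extract a sequence $(y_n)$ with $\|y_n - x\| > \varepsilon$ and $|f(y_n)| \ge 1 - 1/n$, which is exactly the situation just ruled out. A secondary subtlety is that local uniform convexity is stated for sequences $(y_n)$ with $\|y_n\| \to \|x\|$ and $\|x + y_n\| \to 2\|x\|$, so I must confirm both hypotheses hold for my sequence, which the computation of $a_n \to 1$ delivers. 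No issue arises from working over $\bar{B} = \bar{B}^{**}$ versus $\bar{B}$ since $f \in A(B)$ is already defined via a single functional in $X^*$.
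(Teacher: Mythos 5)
Your proposal is correct and follows essentially the same approach as the paper: both pick a norming functional $x^*$ at $x$, use the peaking function $f=\frac{1}{2}(1+x^*)$, and exploit the inequality $\|x+y\|\geq \mathrm{Re}\,x^*(x+y)$ to feed local uniform convexity. The only difference is presentational: the paper extracts a quantitative $\delta$--$\sigma$ modulus directly, while you obtain the same modulus by a sequential contrapositive argument, which is precisely how the paper's quantitative reformulation of local uniform convexity would be justified anyway.
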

\begin{proof}
Pick $x^*$ a norming functional for $x$. Since $X$ is locally uniformly convex at $x$, we have that for every $\varepsilon>0$ there is $\delta>0$ such that $\|y-x\|<\varepsilon$ whenever $y\in \overline{B}$ and $\|(x+y)/2\|>1-\delta$. Thus, if $y\in \overline{B}$ is such that $\text{Re}(x^*(y))>1-\delta$, then $\|\frac{1}{2}(x+y)\|\geq \frac{1}{2} \text{Re}(x^*(x+y))>1-\delta/2$, so consequently $\|y-x\|<\varepsilon$. By taking real and imaginary parts, it is easy to check that there is a number $\sigma>0$ such that, if $\text{Re}(x^*(y))\leq 1-\delta$ with $y\in \overline{B}$, then $|\frac{1}{2}(1+x^*)(y)|<1-\sigma$. Then $f=\frac{1}{2}(1+x^*) \in A(B)$ peaks strongly at $x$.
\end{proof}

A relationship between strong peak points and the cluster value problem is the next result that follows from material in Section 2 of \cite{ACGLM} .

\begin{proposition} 
Suppose that $x \in \bar{B}$ is a strong peak point for $A(B)$ as a function space on $\bar{B}$. Then for $H(B)$ between $A(B)$ and $A_{\infty}(B)$, the fiber $M_x$ reduces to one point, and thus the cluster value theorem for $H(B)$ at $x$ holds.
\end{proposition}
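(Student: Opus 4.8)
The plan is to show that if $x \in \bar{B}$ is a strong peak point for $A(B)$, then the fiber $M_x$ (the original, weak-star fiber) of $H(B)$ consists of the single point $\delta_x$, the evaluation at $x$; the cluster value theorem at $x$ then follows immediately, since by Theorem \ref{thm2.2}'s analogue for the original problem (the inclusion $Cl_B(f,x)\subset \hat{f}(M_x)$ always holds) and the fact that a net weak-star converging to the peak point $x$ forces $f(x_\alpha)\to f_0(x)$, both $Cl_B(f,x)$ and $\hat{f}(M_x)$ reduce to the single value $\hat{f}(\delta_x)$.

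First I would fix $f\in A(B)$ peaking strongly at $x$, so $f(x)=1$ and for every $\varepsilon>0$ there is $\delta>0$ with $|f(y)|<1-\delta$ whenever $\|y-x\|>\varepsilon$. Let $\phi \in M_x$, so that $\phi|_{X^*}$ equals evaluation at $x$ on $X^*$. Since $f \in A(B)$ is (uniformly) generated by $X^*$ and $1$, the key point is that $\phi$ acts on $f$ exactly as evaluation at $x$ does on the generators, and hence $\hat{f}(\phi)=f(x)=1$. Because $\|f\|_\infty=1$ and $\phi$ is a norm-one character, $\hat{f}(\phi)=1=\|f\|$ means $\phi$ ``lives at the peak.'' The plan is to exploit this to pin down $\phi$ on arbitrary $g \in H(B)$.

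The mechanism I would use is the standard strong-peak-point trick: for $g \in H(B)$ with $g(x)=0$ (subtract the constant $g(x)$, which is legitimate since $1 \in H(B)$ and evaluation at $x$ is defined on $A_\infty(B)\supset H(B)$), the products $g\cdot f^n$ have sup-norm tending to $0$. Indeed, splitting $B$ into the region $\|y-x\|\le\varepsilon$, where continuity of $g$ at $x$ (here $H(B)\subset A_\infty(B)$, so $g$ extends continuously to $\bar B$ and $g(x)=0$ makes $|g|$ small) keeps $|g|$ small, and the region $\|y-x\|>\varepsilon$, where $|f^n|<(1-\delta)^n\to 0$ dominates, one gets $\|g f^n\|_\infty\to 0$. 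Applying the character, $\hat{g}(\phi)\hat{f}(\phi)^n = \widehat{g f^n}(\phi)$, and since $\|g f^n\|\to 0$ while $\hat{f}(\phi)=1$, we conclude $\hat{g}(\phi)=0$. Undoing the normalization gives $\hat{g}(\phi)=g(x)$ for all $g\in H(B)$, so $\phi=\delta_x$ and $M_x=\{\delta_x\}$.

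The main obstacle I anticipate is making the continuity argument for the region $\|y-x\|\le\varepsilon$ fully rigorous, namely justifying that $g$ is genuinely continuous at $x$ with $g(x)=0$ controlling $|g|$ near $x$: this is exactly where the hypothesis $H(B)\subset A_\infty(B)$ is essential, since it guarantees $g$ extends continuously to the boundary point $x\in\bar B$. A secondary subtlety is the order of quantifiers in the norm estimate — one must first choose $\varepsilon$ so that $|g|<\eta$ on $\|y-x\|\le\varepsilon$, then obtain the corresponding $\delta$ and the bound $(1-\delta)^n\|g\|_\infty$ on the far region, and only then let $n\to\infty$ — so I would be careful to present the estimate as $\|gf^n\|_\infty \le \eta + (1-\delta)^n\|g\|_\infty$ and take $\limsup_n$ before letting $\eta\to 0$. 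Once $M_x=\{\delta_x\}$ is established, the reduction of the cluster set is immediate and requires no further work.
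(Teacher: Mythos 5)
Your proposal is correct and takes essentially the approach the paper relies on: the paper gives no proof of this Proposition beyond citing Section 2 of \cite{ACGLM}, but the argument there — and the paper's own proof of the polynomial analogue stated immediately after (showing $\hat{g}=1$ on the fiber for the strong peak function $g$, then $\|g^n f\|_\infty \to 0$ to force $\hat{f}=0$ there) — is exactly your mechanism. Your version correctly supplies the extra step needed in the $A(B)$ setting (density of polynomials in functionals to get $\hat{h}(\phi)=\tilde{h}(x)$ for $\phi \in M_x$, $h \in A(B)$), handles the two-region estimate with the right order of quantifiers, and the passage from $M_x=\{\delta_x\}$ to the cluster value theorem via the standard inclusion is sound.
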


Let us present an analogous relationship between strong peak points and the polynomial cluster value problem.

\begin{theorem}
Suppose that $x \in \bar{B}$ is a strong peak point for $A_u(B)$ as a function space on $\bar{B}$. Then $x$ is a peak point for $A_u(B)$ as a function space on $\bar{B}^{**}$. Consequently for all $f \in H^{\infty}(B)$ that extend continuously to $B \cup \{x\}$ we have that the Gelfand transform of $f$ is constant on $M_x^{\mathcal{P}}$.
\end{theorem}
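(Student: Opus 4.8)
The plan is to split the statement into its two assertions and treat them in order, letting $g\in A_u(B)$ denote a function that peaks strongly at $x$ on $\bar{B}$, so that $g(x)=1$, $|g|\le 1$ on $\bar B$, and for each $\varepsilon>0$ there is $\delta>0$ with $|g(y)|<1-\delta$ whenever $y\in\bar B$ and $\|y-x\|>\varepsilon$. By Lemma \ref{lm2.1} its Aron--Berner extension $\tilde g$ lies in $A_u(B^{**})$, and since $g$ is a uniform limit of polynomials on $B$, $\tilde g$ is a uniform limit of Aron--Berner extensions of polynomials and hence is $w(X^{**},P(X))$-continuous. These two features of $\tilde g$ are what I would carry into both parts.

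For the first assertion I would show that $\tilde g$ peaks at $x$ on $\bar B^{**}$. First, $\tilde g(x)=g(x)=1$. Given any $y^{**}\in\bar B^{**}$, the polynomial-star density of $B$ in $\bar B^{**}$ \cite[Thm.~2]{DG} furnishes a net $(y_{\alpha})\subset B$ with $y_{\alpha}\to y^{**}$ in $w(X^{**},P(X))$; continuity of $\tilde g$ then gives $\tilde g(y^{**})=\lim_{\alpha}g(y_{\alpha})$, so $|\tilde g(y^{**})|\le 1$. Suppose $|\tilde g(y^{**})|=1$. Then $|g(y_{\alpha})|\to 1$, and the strong peak inequality forces $\|y_{\alpha}-x\|\to 0$: for each $\varepsilon>0$ the associated $\delta>0$ guarantees $|g(y_{\alpha})|>1-\delta$ eventually, hence $\|y_{\alpha}-x\|\le\varepsilon$ eventually. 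Norm convergence implies $w(X^{**},P(X))$-convergence, since every $P\in P(X)$ is norm continuous and agrees with $\tilde P$ on $X$; thus $y_{\alpha}\to x$ in the polynomial-star topology as well. As that topology refines the weak-star topology it is Hausdorff, so uniqueness of limits yields $y^{**}=x$. Therefore $|\tilde g(y^{**})|<1$ for all $y^{**}\ne x$, and $x$ is a peak point for $A_u(B)$ on $\bar B^{**}$.

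For the second assertion I would fix $f\in H^{\infty}(B)$ extending continuously to $B\cup\{x\}$ and, after subtracting the constant $f(x)\in A_u(B)$, assume $f(x)=0$, so the goal becomes $\hat f\equiv 0$ on $M_x^{\mathcal P}$. Let $\tau\in M_x^{\mathcal P}$. Since $\tau|_{A_u(B)}=\delta_x$ we have $\hat g(\tau)=\tilde g(x)=1$, and multiplicativity gives $\hat f(\tau)=\hat f(\tau)\,\hat g(\tau)^{\,n}=\widehat{fg^{\,n}}(\tau)$ for every $n$. The key estimate is that $fg^{\,n}$ concentrates near $x$: given $\varepsilon>0$, continuity of $f$ at $x$ provides $r>0$ with $|f(y)|<\varepsilon$ for $y\in B$, $\|y-x\|<r$, while the strong peak property provides $\delta>0$ with $|g(y)|\le 1-\delta$ for $y\in B$, $\|y-x\|\ge r$. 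Splitting $B$ into these two regions and using $|g|\le 1$ together with the boundedness of $f$ gives $\|fg^{\,n}\|_B\le\max\{\varepsilon,\ \|f\|_{\infty}(1-\delta)^n\}$, which is at most $\varepsilon$ for $n$ large. As $\tau$ is norm-decreasing, $|\hat f(\tau)|=|\widehat{fg^{\,n}}(\tau)|\le\|fg^{\,n}\|_B\le\varepsilon$ for large $n$; letting $\varepsilon\to 0$ yields $\hat f(\tau)=0$. Since $\tau$ was arbitrary, $\hat f$ is constant on $M_x^{\mathcal P}$.

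The step I expect to be the main obstacle is the strictness $|\tilde g(y^{**})|<1$ off $x$ in the first assertion: one must rule out the Aron--Berner extension attaining modulus one at a bidual point other than $x$, and this is precisely where the interplay of polynomial-star density, continuity of $\tilde g$, the quantitative strong-peak inequality on $\bar B$, and Hausdorffness of the polynomial-star topology becomes essential. The second assertion is then comparatively routine once one notices that the same quantitative inequality drives the concentration estimate for $fg^{\,n}$; I would emphasize that a merely non-strong peaking of $\tilde g$ on $\bar B^{**}$ would not by itself supply the uniform gap $1-\delta$ that the estimate requires.
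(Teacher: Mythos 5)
Your proposal is correct and follows essentially the same route as the paper: the same strong-peak function $g$, polynomial-star density of $B$ in $\bar{B}^{**}$ plus polynomial-star continuity of $\tilde{g}$ (and uniqueness of limits) to get peaking at $x$ on $\bar{B}^{**}$, and the same $fg^{\,n}$ concentration argument combined with $\hat{g}=1$ on $M_x^{\mathcal{P}}$ for the second assertion. The only difference is that you spell out details the paper leaves implicit, such as the explicit estimate $\|fg^{\,n}\|_B\le\max\{\varepsilon,\|f\|_{\infty}(1-\delta)^n\}$ behind the paper's claim that $g^n f \to 0$ uniformly on $B$.
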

\begin{proof}
Since $x \in \bar{B}$ is a strong peak point for $A_u(B)$, there exists $g \in A_u(B)$ such that $g$ peaks strongly at $x$. 

\medskip

If $x_0 \in \bar{B}^{**}$ also satisfies $|\tilde{g}(x_0)|=1$, taking $(x_{\alpha})\subset B$ converging to $x_0$ in the polynomial-star topology, we get that $|g(x_{\alpha})|\to 1$. But this implies that $x_{\alpha}\to x$ in norm. So $x_0=x$. Hence $|\tilde{g}(y)|<1$ for all $y \in \bar{B}^{**}\setminus \{x\}$.

\medskip

Let $f \in H^{\infty}(B)$ extending continuously to $B \cup \{x\}$. Adding a constant to $f$, if necessary, we may assume that $f(y)\to 0$ as $y \to x$. Then $g^n f \to 0$ uniformly on $B$. Consequently $(\hat{g})^n \hat{f} \to 0$ uniformly on $M_{H^{\infty}(B)}$. Since $\hat{g}=1$ on $M_x^{\mathcal{P}}$, then $\hat{f}=0$ on $M_x^{\mathcal{P}}$.
\end{proof}

\begin{corollary}
Suppose that $x \in \bar{B}$ is a strong peak point for $A_u(B)$ as a function space on $\bar{B}$. Then for any uniform algebra $H(B)$ between $A_u(B)$ and $A_{\infty}(B)$, the polynomial fiber $M_x^{\mathcal{P}}$ reduces to one point, and thus the polynomial cluster value theorem for $H(B)$ at $x$ holds.
\end{corollary}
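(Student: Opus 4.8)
The plan is to read both conclusions off the preceding theorem, after first observing that its proof works verbatim with the intermediate algebra $H(B)$ in place of $H^{\infty}(B)$. The strongly peaking function $g$ lies in $A_u(B) \subseteq H(B)$; every $f \in H(B) \subseteq A_{\infty}(B)$ extends continuously to $\bar{B}$, in particular to $x$; and $H(B)$ is again a uniform algebra, so its Gelfand transform is an isometric embedding. Thus, after subtracting the constant $f(x)$ so that $f(x)=0$, the same estimate as before gives $\|g^n f\|_{\infty} \to 0$ (the product is small near $x$ by continuity of $f$, and is damped by the factor $(1-\delta)^n$ away from $x$ by strong peaking), whence $\|(\hat{g})^n \hat{f}\|_{\infty} \to 0$ on $M_{H(B)}$. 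Since $\hat{g} \equiv 1$ on $M_x^{\mathcal{P}} = M_x^{\mathcal{P}}(H(B))$, this forces $\hat{f} \equiv 0$ there; undoing the translation, $\hat{f}$ is constant equal to $f(x)$ on $M_x^{\mathcal{P}}$ for every $f \in H(B)$.

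Next I would identify the fiber. Because $H(B) \subseteq A_{\infty}(B)$, each $f \in H(B)$ extends continuously to $x \in \bar{B}$, so the evaluation $\delta_x \colon f \mapsto f(x)$ is a well-defined character of $H(B)$; and since $\delta_x(g) = g(x) = \tilde{g}(x)$ for $g \in A_u(B)$, its restriction $\pi^{\mathcal{P}}(\delta_x)$ is exactly the character evaluating the Aron-Berner extension at $x$, so $\delta_x \in M_x^{\mathcal{P}}$. Evaluating the (constant) restriction $\hat{f}|_{M_x^{\mathcal{P}}}$ at $\delta_x$ shows that this constant is $f(x)$. Consequently, for any $\tau \in M_x^{\mathcal{P}}$ and any $f \in H(B)$ we have $\tau(f) = \hat{f}(\tau) = f(x) = \delta_x(f)$, so $\tau = \delta_x$. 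Hence $M_x^{\mathcal{P}} = \{\delta_x\}$ is a single point, which is the first assertion.

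The polynomial cluster value theorem at $x$ is then immediate: $\hat{f}(M_x^{\mathcal{P}}) = \{f(x)\}$, while Theorem \ref{thm2.2} gives $Cl_B^{\mathcal{P}}(f,x) \subseteq \hat{f}(M_x^{\mathcal{P}}) = \{f(x)\}$. As $Cl_B^{\mathcal{P}}(f,x)$ is nonempty (a net in $B$ polynomial-star converging to $x$ exists by density of $B$ in $\bar{B}^{**}$, and passing to a subnet makes the bounded values $f(x_{\alpha})$ converge), it must equal $\{f(x)\}$. Therefore equation \ref{eq2.1} holds at $x$ for every $f \in H(B)$.

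The one point that needs genuine care, and the step I expect to be the main obstacle, is the transfer of the preceding theorem from $H^{\infty}(B)$ to the smaller algebra $H(B)$: one must check that $g^n f$ remains inside $H(B)$ (it does, as a product of elements of the algebra) and that the isometric spectral-radius computation is carried out on $M_{H(B)}$, so that the identity $\hat{g} \equiv 1$ is used on the correct fiber $M_x^{\mathcal{P}}(H(B))$ rather than on a fiber of $M_{H^{\infty}(B)}$. Once this is in place, all remaining steps are routine.
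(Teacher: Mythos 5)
Your proof is correct and follows the route the paper intends: the corollary is stated there without its own proof, as a direct consequence of the preceding theorem, and your argument is exactly that deduction carried out inside $M_{H(B)}$ (constancy of every $\hat{f}$ on the fiber, identification of the fiber with $\{\delta_x\}$, then equality with the nonempty cluster set via Theorem \ref{thm2.2}). The point you flag as needing care --- that the theorem's argument must be rerun for the intermediate uniform algebra rather than quoted for $M_{H^{\infty}(B)}$, since characters of $H(B)$ need not extend to characters of $H^{\infty}(B)$ --- is indeed the one genuine subtlety, and your handling of it is sound.
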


Let us finish with some specific (polynomial) cluster value theorems for $A_{\infty}(B)$ at special points using the aforementioned examples of strong peak points.

\begin{corollary}
For each $x \in S_{\ell_1}$, there is a (polynomial) cluster value theorem for $A_{\infty}(B_{\ell_1})$ at $x$.
\end{corollary}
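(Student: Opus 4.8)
The plan is to derive both assertions from the Acosta--Louren\c{c}o theorem on $\ell_1$ by feeding it into the two structural results just established: the Proposition that turns a strong peak point for $A(B)$ into a one-point fiber $M_x$ (hence a cluster value theorem), and the Corollary that turns a strong peak point for $A_u(B)$ into a one-point polynomial fiber $M_x^{\mathcal{P}}$ (hence a polynomial cluster value theorem). The common observation is that $A_{\infty}(B_{\ell_1})$ is a uniform algebra lying between $A(B)$ and $A_{\infty}(B)$, and equally between $A_u(B)$ and $A_{\infty}(B)$ (it is the upper endpoint in both chains), so both structural results apply to it once the relevant strong peak property is in hand.

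First I would settle the original cluster value theorem. By the theorem of Acosta and Louren\c{c}o quoted above, every $x \in S_{\ell_1}$ is a strong peak point for $A(B)$ as a function space on $\bar{B}$. Applying the preceding Proposition with $H(B) = A_{\infty}(B_{\ell_1})$ then yields that the fiber $M_x$ reduces to a single point, and hence that the cluster value theorem for $A_{\infty}(B_{\ell_1})$ holds at $x$.

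For the polynomial version the only thing to check is that the strong peak property passes from $A(B)$ to $A_u(B)$, since the preceding Corollary is phrased in terms of $A_u(B)$ while Acosta and Louren\c{c}o supply a peaking function in $A(B)$. This transfer is immediate: any $g \in A(B)$ that peaks strongly at $x$ also lies in $A_u(B)$, because $A(B) \subseteq A_u(B)$, and it continues to peak strongly at $x$ there. Thus every $x \in S_{\ell_1}$ is automatically a strong peak point for $A_u(B)$ on $\bar{B}$, and the preceding Corollary applied to $H(B) = A_{\infty}(B_{\ell_1})$ gives that $M_x^{\mathcal{P}}$ reduces to one point, whence the polynomial cluster value theorem for $A_{\infty}(B_{\ell_1})$ holds at $x$.

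There is no genuine obstacle here; the proof is a direct concatenation of the two structural results with the single Acosta--Louren\c{c}o input. The only point requiring a moment's attention is the direction of the strong peak transfer, which fortunately goes the favorable way, since enlarging the underlying algebra can only enlarge the supply of candidate peaking functions. I would make the chain $A(B) \subseteq A_u(B) \subseteq A_{\infty}(B)$ explicit so that it is visible how the same peaking function drives both the original and the polynomial conclusions.
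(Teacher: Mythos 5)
Your proposal is correct and follows exactly the paper's intended route: the Acosta--Louren\c{c}o theorem gives that each $x \in S_{\ell_1}$ is a strong peak point for $A(B)$ on $\bar{B}$, the preceding Proposition then yields the cluster value theorem for $A_{\infty}(B_{\ell_1})$ at $x$, and since $A(B) \subseteq A_u(B)$ the same peaking function makes $x$ a strong peak point for $A_u(B)$, so the preceding Corollary yields the polynomial cluster value theorem as well. Your explicit remark on the (trivial but necessary) transfer of the strong peak property from $A(B)$ to $A_u(B)$ is precisely the point the paper leaves implicit.
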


\begin{corollary}
If $X$ is locally uniformly convex at $x \in S_X$, there is a (polynomial) cluster value theorem for $A_{\infty}(B)$ at $x$.
\end{corollary}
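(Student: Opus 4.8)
The plan is to combine the preceding theorem on locally uniformly convex points with its two companion results on strong peak points—the proposition handling the original cluster value problem and the corollary handling the polynomial one—assembling them exactly as was done for $S_{\ell_1}$ in the previous corollary.

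First I would invoke the theorem asserting that local uniform convexity of $X$ at $x \in S_X$ makes $x$ a strong peak point for $A(B)$ as a function space on $\bar{B}$; recall that the peaking function constructed there is the explicit affine function $f=\frac{1}{2}(1+x^*) \in A(B)$, where $x^*$ norms $x$. The key observation to carry over is that, because $A(B) \subset A_u(B)$, this same $f$ lies in $A_u(B)$ and continues to peak strongly at $x$, so $x$ is simultaneously a strong peak point for both $A(B)$ and $A_u(B)$ as function spaces on $\bar{B}$.

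For the original statement I would then apply the proposition on strong peak points: since $A_{\infty}(B)$ is a uniform algebra between $A(B)$ and $A_{\infty}(B)$, the fiber $M_x$ reduces to a single point, which is precisely the cluster value theorem for $A_{\infty}(B)$ at $x$. For the polynomial statement I would apply the companion corollary: since $x$ is a strong peak point for $A_u(B)$ and $A_{\infty}(B)$ lies between $A_u(B)$ and $A_{\infty}(B)$, the polynomial fiber $M_x^{\mathcal{P}}$ reduces to a single point, yielding the polynomial cluster value theorem for $A_{\infty}(B)$ at $x$.

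There is essentially no hard step: the argument is a bookkeeping assembly of earlier results, and the only point demanding a moment's care is the transfer of the strong peak point property from $A(B)$ up to $A_u(B)$, which is immediate here because the peaking function is furnished explicitly inside the smaller algebra $A(B)$. I expect no genuine obstacle beyond verifying that $A_{\infty}(B)$ sits in the correct range required by each of the two invoked results.
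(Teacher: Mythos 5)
Your proposal is correct and matches the paper's intended argument exactly: the paper states this corollary without proof, as precisely the assembly you describe---the local uniform convexity theorem produces the strong peak function $\frac{1}{2}(1+x^*)\in A(B)\subset A_u(B)$, and then the proposition (for the original fiber $M_x$) and the corollary (for the polynomial fiber $M_x^{\mathcal{P}}$) apply with $H(B)=A_{\infty}(B)$. Your care in noting that the strong peak point property transfers from $A(B)$ to $A_u(B)$ because the peaking function lives in the smaller algebra is exactly the right observation.
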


\section{Acknowledgement}

The authors thank Maite Fern\'andez Unzueta for numerous helpful discussions. We also wish to thank Sebasti\'an Lajara for drawing our attention to the notion of local uniform convexity. 

\bibliographystyle{amsplain}
\bibliography{mybibliography}

\end{document}